\newtheorem{theorem}{Theorem}[section]
\newtheorem{proposition}[theorem]{Proposition}
\newtheorem{lemma}[theorem]{Lemma}
\newtheorem{corollary}[theorem]{Corollary}
\theoremstyle{definition}
\newtheorem{definition}[theorem]{Definition}
\theoremstyle{remark}
\newtheorem{remark}[theorem]{Remark}
\numberwithin{equation}{section}
\newcommand{\ab}{\mathrm{ab}}
\newcommand{\divisible}{\mathrm{div}}
\newcommand{\fin}{\mathrm{f}}
\newcommand{\Gal}{\mathrm{Gal}}
\newcommand{\Hom}{\mathrm{Hom}}
\newcommand{\separable}{\mathrm{sep}}
\newcommand{\tor}{\mathrm{tor}}
\DeclareMathOperator{\ord}{ord}
\begin{document}

\title{Kummer-faithfulness for function fields}
\author{Takuya Asayama}
\address[Takuya Asayama]{Department of Mathematics, Tokyo Institute of Technology, 2-12-1, O-okayama, Meguro-ku, Tokyo 152-8551, Japan}
\email{asayama.t.aa@m.titech.ac.jp}
\subjclass[2020]{Primary 12E30; Secondary 11G09, 11R58, 11S15.}
\keywords{Kummer-faithful, Drinfeld modules, maximal ramification break, successive minimum bases.}
\date{}
\maketitle

\begin{abstract}
%%%%% abstract in 2304.04396v1 %%%%%
%We investigate a function field analogue of the notion of Kummer-faithful fields.
%We introduce a notion of Drinfeld-Kummer-faithful (DKF) fields using Drinfeld modules.
%A sufficient condition for a Galois extension of a function field to be DKF is provided in terms of ramification theory.
%Some examples of DKF fields are also given.
%The construction of these examples emulates Ozeki and Taguchi's examples of highly Kummer-faithful fields.
%%%%%%%%%%
A perfect field $K$ is said to be Kummer-faithful if the Mordell-Weil group of every semi-abelian variety over every finite extension of $K$ has no nonzero divisible element.
The class of Kummer-faithful fields contains that of sub-$p$-adic fields and is thought to be suitable for developing anabelian geometry.
In this paper, we investigate a function field analogue of the notion of Kummer-faithful fields.
We introduce a notion of Drinfeld-Kummer-faithful (DKF) fields using Drinfeld modules.
A sufficient condition for a Galois extension of a function field to be DKF is provided in terms of ramification theory.
More precisely, a Galois extension with finite maximal ramification break outside the infinite prime $(1 / t)$ over a finite extension of the rational function field $\mathbb{F}_q(t)$ over the finite field $\mathbb{F}_q$ of $q$ elements is DKF.
Some examples of DKF fields are also given.
The construction of these examples is inspired by Ozeki and Taguchi's examples of highly Kummer-faithful fields.
\end{abstract}

\section{Introduction}

Kummer-faithful fields, introduced by Mochizuki~\cite{Mochizuki}, are thought to be suitable for the base field in anabelian geometry.
They are defined by the triviality of the divisible parts of the Mordell-Weil groups of semi-abelian varieties as follows.

%%%%% Definition of KF-ness by Mochizuki %%%%%
\begin{definition}
Let $K$ be a perfect field.
We say that $K$ is \textit{Kummer-faithful} if, for every finite extension $L$ of $K$ and every semi-abelian variety $X$ over $L$, it holds that $\bigcap_{n > 0} n \cdot X(L) = 0$, where $n$ runs over all positive rational integers.
\end{definition}

For instance, every sub-$p$-adic field, i.e., every field isomorphic to a subfield of a finitely generated extension of the $p$-adic field $\mathbb{Q}_p$, is Kummer-faithful.
There is also a Kummer-faituful field which is not sub-$p$-adic~\cite[Remark~1.5.4]{Mochizuki}.

Ozeki and Taguchi~\cite{OT} introduced the notion of highly Kummer-faithful fields and studied its properties in terms of ramification theory.
The notion of highly Kummer-faithful fields is a specialization of that of Kummer-faithful fields in the sense that high Kummer-faithfulness implies Kummer-faithfulness for a Galois extension of a Kummer-faithful field of characteristic zero~\cite[Proposition~2.8]{OT}.
They also provided some examples of highly Kummer-faithful fields.
One of their examples is constructed by adjoining to a number field $K$ the torsion points of all semi-abelian varieties $X$ over $K$ of bounded dimension, having order of bounded prime power.
The precise statement is the following.

%%%%% Construction of HKF fields by Ozeki--Taguchi %%%%%
\begin{theorem}[{\cite[Theorem~3.3]{OT}}]
Let $K$ be a number field, $g$ a positive integer, and $\boldsymbol{m} = (m_p)_p$ a family of non-negative integers, where $p$ runs over all rational prime numbers.
Let $K_{g, \boldsymbol{m}}$ be the extension of $K$ generated by all coordinates of elements of $X[p^{m_p}]$ for all semi-abelian varieties $X$ over $K$ of dimension at most $g$ and all rational prime number $p$.
Then $K_{g, \boldsymbol{m}}$ is highly Kummer-faithful.
In particular, it is Kummer-faithful.
\end{theorem}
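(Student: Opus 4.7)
The plan is to treat high Kummer-faithfulness prime by prime and to convert the supposed existence of a divisible element into a Galois-theoretic constraint via Kummer theory. Fix a finite extension $L/K_{g,\boldsymbol m}$, a semi-abelian variety $X/L$, and a rational prime $p$; I aim to rule out a nonzero $p^\infty$-divisible point $x \in X(L)$ (or to prove the controlled analogue required by the \emph{highly} refinement of Kummer-faithfulness). Choosing preimages $x_n \in X(\overline L)$ with $p^n x_n = x$, the cocycles $\sigma \mapsto \sigma(x_n) - x_n$ glue to a continuous homomorphism $\kappa : G_L \to T_p(X)$, and divisibility of $x$ inside $X(L)$ forces $\kappa$ to be a coboundary.

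The key step is to promote this cohomological vanishing into rationality of extra torsion. Consider the $1$-motive $M = [\mathbb Z \xrightarrow{x} X]$ and its Cartier dual $M^\vee$; the latter corresponds to a semi-abelian variety $\tilde X$ over $L$ of dimension $\dim X + 1$ fitting in an extension $0 \to \mathbb G_m \to \tilde X \to X^\vee \to 0$ whose $p^\infty$-torsion becomes $L$-rational precisely because $x$ is $p^\infty$-divisible. Assuming momentarily that $\tilde X$ has a model over $K$ of dimension $\leq g$, the defining property of $K_{g,\boldsymbol m}$ places $\tilde X[p^{m_p}]$ inside $K_{g,\boldsymbol m}$ from the outset. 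Combining this inclusion with a Bashmakov-type injectivity of the Kummer map $X(L)/p^n X(L) \hookrightarrow H^1(G_L, X[p^n])$ forces $x$ to be torsion, and the torsion case is then disposed of by applying the same torsion-content hypothesis directly to $X$.

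The main obstacle is the dimension bookkeeping: the hypothesis controls only varieties over $K$ of dimension $\leq g$, whereas $X$ and the auxiliary $\tilde X$ are defined over $L$ and may have arbitrary dimension. I would resolve this via a spreading-out and Weil restriction argument: write $L$ as a finite extension of a number subfield $K' \subset L$, descend $X$ to a model $X'/K'$, and pass to the Weil restriction $\mathrm{Res}_{K'/K} X'$, a semi-abelian variety over $K$ whose dimension is bounded by $[K':K] \cdot \dim X$ and whose rational points absorb those of $X'(K')$. Organizing the reduction so that every auxiliary semi-abelian variety produced in the construction fits within the fixed dimension budget $g$, while simultaneously running the argument uniformly over all primes $p$ with their respective exponents $m_p$, is the technical heart of the proof.
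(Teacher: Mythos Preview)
This theorem is quoted from \cite{OT} and is not proved in the present paper; there is no ``paper's own proof'' here to match against. That said, the paper's Theorem~\ref{mythm} is the direct function field analogue, and its proof makes clear what the intended strategy is: one does \emph{not} analyse divisible points on a test variety $X/L$ at all. Instead one proves a purely ramification-theoretic statement about the ground field $K_{g,\boldsymbol m}$ itself---namely that each local extension $K_\mathfrak{l}(X[p^{m_p}])/K_\mathfrak{l}$ has maximal break bounded independently of $X$ and $p$ (via conductor bounds for semi-abelian varieties of dimension $\le g$), so that the compositum $K_{g,\boldsymbol m}$ has uniformly bounded upper ramification everywhere. ``Highly Kummer-faithful'' in \cite{OT} is precisely such a ramification condition, and the implication to ordinary Kummer-faithfulness is a separate general lemma (cf.\ Proposition~\ref{criteria} and Theorem~\ref{FMBOItoDKF} here).

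Your proposal goes in an orthogonal direction and has a genuine gap you yourself flag but do not close. The test variety $X$ lives over an \emph{arbitrary} finite extension $L$ of $K_{g,\boldsymbol m}$ and has \emph{arbitrary} dimension; nothing ties it to the parameter $g$. Your auxiliary $\tilde X$ has dimension $\dim X+1$, and Weil restriction along $K'/K$ multiplies dimension by $[K':K]$, which is again uncontrolled. So the sentence ``organizing the reduction so that every auxiliary semi-abelian variety produced in the construction fits within the fixed dimension budget $g$'' is not a technicality---it is simply impossible in general, and without it the defining property of $K_{g,\boldsymbol m}$ gives you nothing about $\tilde X[p^{m_p}]$. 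A second issue is that you treat ``highly Kummer-faithful'' as an unspecified refinement (``or to prove the controlled analogue\ldots''); in fact it is a concrete ramification hypothesis, and proving it requires bounding conductors of torsion-point extensions, not Kummer/$1$-motive manipulations on a hypothetical divisible point.
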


This paper investigates a function field analogue of the notion of Kummer-faithful fields.
We introduce Drinfeld-Kummer-faithful (DKF) fields by the triviality of the divisible parts of Drinfeld modules.
A sufficient condition for a Galois extension of a function field to be DKF is provided in terms of ramification theory.
Some examples of DKF fields are also given.
One of these is constructed using torsion points of Drinfeld modules in a similar way as the above theorem, except that we add another parameter to limit the badness of reduction.
We note that the maximal break of the $t$-torsion points of a Drinfeld $\mathbb{F}_q[t]$-module $\phi$ over $\mathbb{F}_q(t)$ at a finite prime may be arbitrarily large even if $\phi$ is of rank~$2$ (see~\cite{AH}).

\section{Drinfeld-Kummer-faithful fields}

Let $F$ be a global function field over the finite field $\mathbb{F}_q$ of $q$ elements.
Fix a prime $\infty$ of $F$ and let $A$ be the subring of $F$ consisting of functions regular outside $\infty$.
For any field $E$, fix an algebraic closure $\overline{E}$ of $E$ and let $E^\separable$ be the separable closure of $E$ in $\overline{E}$.
Denote by $G_E$ the absolute Galois group $\Gal(E^\separable / E)$ of $E$.

In this section, we define a notion of Drinfeld-Kummer-faithful fields and study its properties.
Unlike that of Kummer-faithfulness, we do not assume that the field is perfect.
Throughout the present paper, we only consider Drinfeld modules of generic characteristic.

%%%%% Definition of divisibility %%%%%
\begin{definition}
Let $M$ be an $A$-module.
An element $x$ in $M$ is called \textit{divisible} if, for every nonzero $a \in A$, there exists $y$ in $M$ such that $x = a y$.
For an ideal $\mathfrak{a}$ of $A$, we say that $x \in M$ is \textit{$\mathfrak{a}$-divisible} if, for every positive integer $n$, there exist $a \in \mathfrak{a}^n$ and $y \in M$ such that $x = a y$.
Denote by $M_\divisible$ (resp.\ $M_{\mathfrak{a}\text{-}\divisible}$) the set of divisible (resp.\ $\mathfrak{a}$-divisible) elements in $M$, i.e.,
\[ M_\divisible = \bigcap_{a \in A \setminus \{0\}} a M \quad \left(\text{resp.\ } M_{\mathfrak{a}\text{-}\divisible} = \bigcap_{n > 0} \mathfrak{a}^n M \right). \]
For simplicity, if $\mathfrak{a} = (a)$ is a principal ideal, we say $a$-divisible for $(a)$-divisible and write $M_{a\text{-}\divisible}$ for $M_{(a)\text{-}\divisible}$.
\end{definition}

%%%%% Definition of DKF-ness %%%%%
\begin{definition}
    An extension $K$ over $F$ is called \textit{Drinfeld-Kummer-faithful} (we abbreviate it as \textit{DKF}) if, for every finite extension $L$ of $K$ and every Drinfeld $A$-module $\phi$ over $L$, it holds that $\phi(L)_\divisible = 0$.
    Here, we denote by $\phi(L)$ the set $L$ equipped with the $A$-module structure via $\phi$.
\end{definition}

It immediately follows from the definition that any intermediate field of a DKF field $K / F$ is also DKF.
Let $K'$ be a finite extension of $K / F$.
Then $K'$ is DKF if and only if $K$ is DKF.

If $K$ is a finitely generated field over $F$, then, for any Drinfeld $A$-module $\phi$ over $K$, the $A$-module $\phi(K)$ is the direct sum of a finite torsion module and a free $A$-module of rank $\aleph_0$~\cite{Poonen, Wang}, hence we have $\phi(K)_\divisible = 0$.
Therefore $K$ is DKF.
On the other hand, the algebraic closure $\overline{F}$ and the separable closure $F^\separable$ of $F$ are obviously not DKF.
The DKF-ness represents that the field is small in the sense that Drinfeld modules have no nonzero divisible points.

Let $\phi$ be a Drinfeld $A$-module and $\mathfrak{p}$ be a nonzero prime ideal.
The \textit{$\mathfrak{p}$-adic Tate module} of $\phi$ is defined by $T_\mathfrak{p}(\phi) = \varprojlim_n \phi[\mathfrak{p}^n]$.
The following proposition is an analogue of \cite[Proposition~2.4]{OT}.

%%%%% Criteria using Tate modules %%%%%
\begin{proposition}\label{criteria}
Let $K \subseteq L$ be two extensions in $\overline{F}$ over $F$.
Let $\phi$ be a Drinfeld $A$-module over $K$.
\begin{enumerate}[\textup{(\arabic{enumi})}]
    \item For any nonzero prime ideal $\mathfrak{p}$ in $A$, the following conditions are equivalent$:$
    \begin{enumerate}[\textup{(\roman{enumii})}]
        \item $(\phi(L)[\mathfrak{p}^\infty])_{\mathfrak{p}\text{-}\divisible} = 0$.
        \item $\phi(L)[\mathfrak{p}^\infty]$ is finite.
        \item $T_\mathfrak{p}(\phi)^{G_L} = 0$.
    \end{enumerate}
    \item Consider the following conditions on $\phi$$:$
    \begin{enumerate}[\textup{(\alph{enumii})}]
        \item $\phi(L)_\divisible = 0$.
        \item $(\phi(L)_\tor)_\divisible = 0$.
        \item $\phi(L)[\mathfrak{p}^\infty]$ is finite for any nonzero prime ideal $\mathfrak{p}$ in $A$.
    \end{enumerate}
    Then we have \textup{(a)} $\Rightarrow$ \textup{(b)} $\Leftrightarrow$ \textup{(c)}.
    If $K$ is DKF and $L / K$ is Galois, then we have \textup{(a)} $\Leftrightarrow$ \textup{(b)} $\Leftrightarrow$ \textup{(c)}.
\end{enumerate}
\end{proposition}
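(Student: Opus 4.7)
For part (1), which supplies the structural input for part (2), I would first observe that $M := \phi(L)[\mathfrak{p}^\infty]$ is not merely an $A$-submodule of $\phi[\mathfrak{p}^\infty] \cong (F_\mathfrak{p}/A_\mathfrak{p})^r$ (where $r$ is the rank of $\phi$), but in fact an $A_\mathfrak{p}$-submodule: every $s \in A \setminus \mathfrak{p}$ acts invertibly and $G_L$-equivariantly on $\phi[\mathfrak{p}^\infty]$, so preserves the $G_L$-fixed part $M$. Matlis duality (submodules of the injective hull $(F_\mathfrak{p}/A_\mathfrak{p})^r$ correspond to quotients of $A_\mathfrak{p}^r$) then forces $M \cong (F_\mathfrak{p}/A_\mathfrak{p})^t \oplus N$ for some $0 \le t \le r$ and a finite $A_\mathfrak{p}$-module $N$. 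The finite part $N$ contributes nothing to $M_{\mathfrak{p}\text{-}\divisible}$ or to $T_\mathfrak{p}(M) = T_\mathfrak{p}(\phi)^{G_L}$, while each $F_\mathfrak{p}/A_\mathfrak{p}$-summand contributes a copy of itself to $M_{\mathfrak{p}\text{-}\divisible}$ and of $A_\mathfrak{p}$ to $T_\mathfrak{p}(M)$. Thus (i), (ii), (iii) are each equivalent to $t = 0$.

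For part (2), the inclusion $(\phi(L)_\tor)_\divisible \subseteq \phi(L)_\divisible$ yields (a) $\Rightarrow$ (b) immediately. For (c) $\Rightarrow$ (b), given $x \in (\phi(L)_\tor)_\divisible$ I would decompose $x = \sum_i x_i$ into its primary parts $x_i \in \phi(L)[\mathfrak{p}_i^\infty]$ indexed by the primes dividing the annihilator of $x$; since the $A$-action on $\phi(L)_\tor$ respects the primary decomposition, each $x_i$ inherits $\mathfrak{p}_i$-divisibility within $\phi(L)[\mathfrak{p}_i^\infty]$ and so vanishes by (c) and part (1). Conversely, for (b) $\Rightarrow$ (c), I fix $\mathfrak{p}$ and $\varpi \in A$ with $v_\mathfrak{p}(\varpi) = 1$, and note that any nonzero $a \in A$ factors as $a = u \varpi^{v_\mathfrak{p}(a)}$ in $A_\mathfrak{p}$ with $u \in A_\mathfrak{p}^\times$ acting invertibly on $\phi(L)[\mathfrak{p}^\infty]$. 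For $x \in (\phi(L)[\mathfrak{p}^\infty])_{\mathfrak{p}\text{-}\divisible}$ one writes $x = \varpi^{v_\mathfrak{p}(a)} z$ with $z \in \phi(L)[\mathfrak{p}^\infty]$, and then $y := u^{-1} z$ satisfies $ay = x$; this places $x$ in $(\phi(L)_\tor)_\divisible$, forcing $x = 0$ and giving (c) via part (1).

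The main step is the final implication (b) $\Rightarrow$ (a) under the DKF and Galois hypotheses. Given $x \in \phi(L)_\divisible$, I would set $L_1 := K(x)$, which is finite over $K$ since $L \subseteq \overline{F}$ and over which $L$ remains Galois. The goal is to show $x \in \phi(L_1)_\divisible$, as DKF of $K$ then forces $x = 0$. Fix nonzero $\pi \in A$: the sets $\{y \in \phi(L) : \pi^n y = x\}$ are cosets of $\phi(L)[\pi^n]$, each finite by (c) at every prime divisor of $\pi$, so their inverse limit under multiplication by $\pi$ is nonempty, yielding a compatible system $(y_n)_n$ with $\pi^n y_n = x$ and $\pi y_{n+1} = y_n$. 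For $\sigma \in \Gal(L/L_1)$ the family $(\sigma(y_n) - y_n)_n$ lies in $\varprojlim_n \phi(L)[\pi^n]$, which via the Chinese remainder theorem is isomorphic to $\prod_{\mathfrak{q} \mid (\pi)} T_\mathfrak{q}(\phi)^{G_L}$, and this vanishes by (c) together with part (1). Consequently each $y_n \in \phi(L^{\Gal(L/L_1)}) = \phi(L_1)$, and taking $n = 1$ shows $x \in \pi \phi(L_1)$; as $\pi$ was arbitrary, $x \in \phi(L_1)_\divisible$. The step I expect to be the main obstacle is identifying $\varprojlim_n \phi[\pi^n]$ with $\prod_{\mathfrak{q} \mid (\pi)} T_\mathfrak{q}(\phi)$ when $\pi$ has multiple prime divisors in $A$, so that the hypothesis (c) at each prime indeed suffices to conclude that every Galois conjugate $\sigma(y_n)$ equals $y_n$.
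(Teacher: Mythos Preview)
Your argument is correct throughout, and for the key implication (b)$\Rightarrow$(a) it takes a genuinely different route from the paper's. In part~(1) the paper picks $a\in A$ with $(a)$ a $\mathfrak{p}$-power (using finiteness of the class group) and identifies $T_\mathfrak{p}(\phi)^{G_L}$ with $\Hom_A(A[1/a]/A,\phi(L)[a^\infty])$, while you instead invoke the structure of $A_\mathfrak{p}$-submodules of $(F_\mathfrak{p}/A_\mathfrak{p})^r$; both yield the same trichotomy. For (b)$\Rightarrow$(a) the paper argues by contradiction: from a nonzero $x\in\phi(L)_\divisible$ and a compatible family $(y_a)_a$ it uses DKF of $K$ to locate some $y_{a_0}\notin K'=K(x)$, chooses $\sigma_0$ moving it, and checks that $z_{a_0}=y_{a_0}^{\sigma_0}-y_{a_0}$ is a \emph{nonzero} element of $(\phi(L)_\tor)_\divisible$, contradicting~(b) directly. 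You instead fix $\pi$, use the vanishing $\varprojlim_n\phi(L)[\pi^n]\cong\prod_{\mathfrak{q}\mid\pi}T_\mathfrak{q}(\phi)^{G_L}=0$ (supplied by~(c)) to force every $y_n$ to be $\Gal(L/L_1)$-invariant and hence already in $L_1$, and then apply DKF to $L_1$. Your approach has the pleasant feature of showing that $x$ literally descends to $\phi(L_1)_\divisible$; the paper's has the advantage of using only~(b), not the equivalence with~(c). Two minor remarks: the finiteness of $\phi(L)[\pi^n]$ is unconditional (it sits inside the finite group $\phi[\pi^n]$), so invoking~(c) there is unnecessary; and the identification $\varprojlim_n\phi[\pi^n]\cong\prod_{\mathfrak{q}\mid\pi}T_\mathfrak{q}(\phi)$ you flag as the main obstacle is routine once one applies CRT to write $\phi[\pi^n]\cong\prod_\mathfrak{q}\phi[\mathfrak{q}^{n v_\mathfrak{q}(\pi)}]$.
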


\begin{proof}
(1) The definition of Tate modules implies the equivalence of (ii) and (iii).
Since the class number of $A$ is finite (see~\cite[Lemma~4.1.2]{FJ}), there exists $a \in A$ such that the principal ideal $(a)$ is a $\mathfrak{p}$-power.
Then $T_\mathfrak{p}(\phi) = \varprojlim_n \phi[\mathfrak{p}^n] \cong \varprojlim_m \phi[a^m]$ and this isomorphism is compatible with the action of $G_L$.
The natural isomorphism $(\varprojlim_m \phi[a^m])^{G_L} \cong \Hom_A(A[1 / a] / A, \phi(L)[a^\infty])$ and $(\phi(L)[a^\infty])_{a\text{-}\divisible} = (\phi(L)[\mathfrak{p}^\infty])_{\mathfrak{p}\text{-}\divisible}$ yield the equivalence of (i) and (ii).

(2) It is obvious that (a) implies (b).
To see that (b) implies (c), we take $a_\mathfrak{p} \in A$ such that the principal ideal $(a_\mathfrak{p})$ is a $\mathfrak{p}$-power as in (1) for each $\mathfrak{p}$.
Then we have the natural isomorphism
\begin{align*}
    \prod_\mathfrak{p} T_\mathfrak{p}(\phi)^{G_L} &\cong \prod_\mathfrak{p} \Hom_A(A[1 / a_\mathfrak{p}] / A, \phi(L)[a_\mathfrak{p}^\infty]) \\
    &\cong \prod_\mathfrak{p} \Hom_A(A[1 / a_\mathfrak{p}] / A, \phi(L)_\tor) \\
    &\cong \Hom_A\left(\bigoplus_\mathfrak{p} A[1 / a_\mathfrak{p}] / A, \phi(L)_\tor\right) \\
    &\cong \Hom_A(F / A, \phi(L)_\tor),
\end{align*}
where $\mathfrak{p}$ runs over the nonzero prime ideals of $A$.
By (b), we have $T_\mathfrak{p}(\phi)^{G_L} = 0$ for all $\mathfrak{p}$.
Thus (c) holds from (1).

To show that (c) implies (b), we prove the contraposition.
Suppose that there exists a nonzero $x \in (\phi(L)_\tor)_\divisible$.
Let $\mathfrak{a}$ be the annihilator of $x$ and $\mathfrak{p}_1, \dotsc, \mathfrak{p}_n$ the prime ideals appearing in the prime decomposition of $\mathfrak{a}$.
Set $S = A \setminus \left(\bigcup \mathfrak{p}_i\right)$ and $B = S^{- 1} A$.
Then $B$ is a principal ideal domain since it is a Dedekind domain with only finitely many prime ideals (see~\cite[Chapter~I, Section~3, Exercise~4]{Neukirch}).
Hence there is $b \in B$ with $S^{- 1} \mathfrak{a} = (b)$.
In addition, $x$ is a nonzero divisible point in $S^{- 1} \phi(L)_\tor$ and $(b)$ is the annihilator of $x$ in $S^{- 1} \phi(L)_\tor$.
Let $\mathfrak{P} = (\pi)$ be a prime ideal of $B$ dividing $b$.
By renumbering the indices, we may assume $A \cap \mathfrak{P} = \mathfrak{p}_1$.
Then it follows that $\pi^{- 1} b x$ is a nonzero point in $(S^{- 1} \phi(L))[\mathfrak{P}^\infty]_{\mathfrak{P}\text{-}\divisible}$.
Take $s \in S$ satisfying $s \pi^{- 1} b \in A$ and let $y = s \pi^{- 1} b x$.
Then $y$ is a nonzero point in $\phi(L)[\mathfrak{p}_1^\infty]_{\mathfrak{p}_1\text{-}\divisible}$.
By (1), $\phi(L)[\mathfrak{p}_1^\infty]$ is infinite and the negation of (c) holds.

Now we prove that (b) implies (a) if $K$ is DKF and $L / K$ is Galois.
We use again the proof by contraposition.
Suppose that there is a nonzero $x \in \phi(L)_\divisible$.
For a nonzero $a \in A$, we set $X_a = \{y \in \phi(L) \mid x = a y\}$.
Then $(X_a)_a$ forms a projective system.
Since each $X_a$ is not empty by assumption, the projective limit $\varprojlim_a X_a$ is also not empty.
Let $(y_a)_a \in \varprojlim_a X_a$ and $K'$ a finite subextension of $L / K$ with $x \in \phi(K')$.
Then $K'$ is DKF since $K$ is so.
Thus there exists $a_0 \in A$ such that $y_{a_0} \notin \phi(K')$.
Let $\sigma_0$ be an element in $G_{K'}$ satisfying $y_{a_0}^{\sigma_0} \neq y_{a_0}$ and put $z_a = y_a^{\sigma_0} - y_a$ for each nonzero $a \in A$.
Then $z_{a_0} \neq 0$ and $a z_{a a_0} = z_{a_0}$ for each $a$.
Since $L / K$ is Galois, we have $z_{a_0} \in \phi(L)[a_0]$.
This shows that $z_{a_0}$ is a nonzero point in $(\phi(L)_\tor)_\divisible$, which establishes the negation of (b).
\end{proof}

In order to study DKF-ness in terms of ramification theory, we introduce the notion of the finiteness of the maximal breaks according to Ozeki--Taguchi~\cite[Definition~2.14]{OT}.
For any algebraic field extension $E$ over $F$, let $\mathfrak{M}_E^\fin$ be the set of primes of $E$ not lying above $\infty$ and we call the element of $\mathfrak{M}_E^\fin$ a \textit{finite prime}.
Let $\mathfrak{M}_E^\infty$ be the set of primes of $E$ lying above $\infty$ and we call the element of $\mathfrak{M}_E^\infty$ an \textit{infinite prime}.
Let $\mathfrak{M}_A$ be the set of nonzero prime ideals of $A$ and we identify it with $\mathfrak{M}^{\mathrm{f}}_F$.
Since, in contrast to the number field case, the influence of the ramification at infinite primes can not been essentially ignored in the function field case, the following definition takes into account such circumstances.

%%%%% Definition of having finite maximal break outside $\infty$ %%%%%
\begin{definition}\label{FMBOI}
\begin{enumerate}[(1)]
    \item Let $K$ be an algebraic extension of a local field $E$ of positive characteristic.
    Let $\widetilde{K}$ be the Galois closure of the maximal separable subextension of $K / E$.
    The \textit{maximal ramification break} (\textit{maximal break} for short) of $K / E$ is defined to be $\inf \{u \in [- 1, + \infty) \mid \Gal(\widetilde{K} / E)^u = 1\}$.
    Here, for $u \ge - 1$, we write $\Gal(\widetilde{K} / E)^u$ for the $u$-th upper ramification group of $\Gal(\widetilde{K} / E)$ in the sense of Serre~\cite[Chapter~IV, Section~3]{Serre}.
    The extension $K / E$ is said to have \textit{finite maximal break} if the maximal break of $K / E$ is finite.
    \item Let $K$ be a Galois extension of a function field $F$.
    For $\mathfrak{p} \in \mathfrak{M}_A$, the extension $K / F$ is said to have \textit{finite maximal break at $\mathfrak{p}$} if the extension $K_\mathfrak{P} / F_\mathfrak{p}$ has finite maximal break, where $\mathfrak{P} \in \mathfrak{M}_K^\fin$ is above $\mathfrak{p}$.
    Notice that this is defined independently of the choice of $\mathfrak{P}$.
    \item Let $K$ be a Galois extension of a function field $F$.
    The extension $K / F$ is said to have \textit{finite maximal break outside $\infty$} if $K / F$ has finite maximal break at every $\mathfrak{p} \in \mathfrak{M}_A$.
\end{enumerate}
\end{definition}

In the rest of this section, we assume $F = \mathbb{F}_q(t)$, $\infty = (1 / t)$, and $A = \mathbb{F}_q[t]$.

%%%%% Having finite maximal break outside $\infty$ implies Tate freeness %%%%%
\begin{proposition}\label{FMBOItoTF}
    Let $K$ be a finite extension of $F$, $\phi$ a Drinfeld $A$-module over $K$, and $\mathfrak{p}$ a nonzero prime ideal of $A$.
    For a Galois extension $L / K$ with finite maximal break outside $\infty$, we have $T_\mathfrak{p}(\phi)^{G_L} = 0$.
\end{proposition}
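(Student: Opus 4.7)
The plan is to argue by contradiction. Suppose that $T_\mathfrak{p}(\phi)^{G_L}$ contains a nonzero element~$x$. By the equivalence (iii)$\Leftrightarrow$(ii) of Proposition~\ref{criteria}(1), this is equivalent to the infiniteness of $\phi(L)[\mathfrak{p}^\infty]$. Since $A=\mathbb{F}_q[t]$ is a principal ideal domain, we may write $\mathfrak{p}=(a)$ for some irreducible $a\in A$; then $x$ corresponds via $T_\mathfrak{p}(\phi)\cong\varprojlim_n \phi[a^n]$ to a compatible system of nonzero $L$-rational torsion points $y_n\in\phi(L)[a^n]$ with $a\cdot y_{n+1}=y_n$, each $y_n$ having annihilator exactly $(a^n)$ for all sufficiently large~$n$.

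Next I would localize at a finite prime. Fix $\mathfrak{P}\in\mathfrak{M}_K^\fin$ above $\mathfrak{p}$ and $\mathfrak{Q}\in\mathfrak{M}_L^\fin$ above $\mathfrak{P}$. Each extension $K_\mathfrak{P}(y_n)/K_\mathfrak{P}$ is contained in $L_\mathfrak{Q}/K_\mathfrak{P}$, which has finite maximal ramification break by hypothesis. It therefore suffices to show that the maximal ramification break of $K_\mathfrak{P}(y_n)/K_\mathfrak{P}$ tends to $+\infty$ as $n\to\infty$; this will yield the desired contradiction.

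For this local growth of ramification, I would invoke the theory of Drinfeld modules at $\mathfrak{P}$. Since finite maximal break is preserved under finite base change, one may first replace $K$ by a finite extension so that $\phi$ acquires stable reduction at $\mathfrak{P}$. Because $\phi$ has generic characteristic and $\mathfrak{p}$ is a finite prime, the reduction $\bar\phi$ is a Drinfeld $A$-module of positive height $h_0\leq r$, where $r$ denotes the rank of $\phi$, so the $\mathfrak{p}$-adic Tate module carries a nontrivial formal (connected) subrepresentation of rank~$h_0$. A Newton-polygon analysis of the iterated polynomials $\phi_{a^n}(X)$ shows that the upper ramification breaks of the torsion extensions $K_\mathfrak{P}(\phi[a^n])/K_\mathfrak{P}$ grow without bound, and the $a$-divisibility of the compatible tower $(y_n)$ forces each $K_\mathfrak{P}(y_n)$ to sit deep in this ramification tower: even when some individual $y_n$ reduces to a nonzero (``\'etale'') point of $\bar\phi$, its compatible preimages under $a$ must eventually land in wildly ramified extensions, because the connected--\'etale filtration of the Tate module does not in general split as a $G_{K_\mathfrak{P}}$-module.

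The main obstacle I foresee is this last quantitative claim --- handling uniformly both formal and \'etale members of the compatible tower $(y_n)$ when estimating the ramification breaks of $K_\mathfrak{P}(y_n)/K_\mathfrak{P}$. I would expect to settle it by explicit ramification computations for the torsion of formal $A$-modules, in the spirit of the results cited in~\cite{AH}.
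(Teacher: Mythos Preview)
Your approach is purely local at a prime $\mathfrak{P}$ above~$\mathfrak{p}$, and this cannot succeed in general. The claim you need---that for every nonzero $x=(y_n)\in T_\mathfrak{p}(\phi)$ the maximal break of $K_\mathfrak{P}(y_n)/K_\mathfrak{P}$ is unbounded---is simply false. When $\phi$ has good reduction at $\mathfrak{P}$ with height $h_0<r_\phi$, the connected--\'etale sequence $0\to T^0\to T_\mathfrak{p}(\phi)\to T^{\mathrm{et}}\to 0$ has nonzero \'etale quotient, and whenever this sequence admits a section over the inertia group $I_\mathfrak{P}$ (which does occur, e.g.\ for suitable CM Drinfeld modules), one obtains a nonzero $x\in T_\mathfrak{p}(\phi)^{I_\mathfrak{P}}$; every component $y_n$ of such an $x$ lies in $K_\mathfrak{P}^{\mathrm{ur}}$, so the break stays at~$0$. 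Your remark that the filtration ``does not in general split'' is the wrong direction: you must exclude splitting for the specific $\phi$ at hand, which you cannot do. Moving to another prime does not help either: at finite $\mathfrak{l}\nmid\mathfrak{p}$ of good reduction the $\mathfrak{p}$-power torsion is unramified, and at~$\infty$ the hypothesis says nothing.

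The paper's proof is genuinely global and avoids this trap. It passes to the determinant character $\chi=\det(T_\mathfrak{p}(\phi)^{G_L})$ and exploits two inputs that are invisible locally at~$\mathfrak{P}$: (i) the Weil-type bound $\lvert\alpha_\mathfrak{l}\rvert_\infty=q_\mathfrak{l}^{1/r_\phi}>1$ on Frobenius eigenvalues, forcing $\chi$ to have infinite image, and (ii) the analytic uniformization at~$\infty$, which makes the inertia at~$\infty$ act through a finite quotient. Combining (ii) with the finite-maximal-break hypothesis at the finitely many bad finite primes (via local class field theory: $U^{(u)}$ surjects onto the $u$-th upper ramification group) shows that, after a finite base change, $\chi$ is everywhere unramified. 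But then~(i) produces an infinite unramified geometric abelian extension of a global function field, contradicting the finiteness of the divisor class group. In short, the contradiction comes from class field theory, not from ramification growth at a single prime; no amount of Newton-polygon analysis at~$\mathfrak{P}$ will close the gap you identified.
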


\begin{proof}
Assume $T = T_\mathfrak{p}(\phi)^{G_L} \neq 0$.
The Galois group $G = \Gal(L / K)$ acts on $T$ and the representation arising from this is unramified at finite primes of $K$ not lying above $\mathfrak{p}$ at which $\phi$ has good reduction~\cite[Theorem~1]{Takahashi}.
Hence there exists a finite subset $S \subseteq \mathfrak{M}_K^\fin$ such that $T_\mathfrak{p}(\phi)$ is unramified outside $S \cup \mathfrak{M}_K^\infty$.
Let $\chi: G \to A_\mathfrak{p}^\times$ be the character induced from the action of $G$ on $\det T \cong A_\mathfrak{p}$ and $K^\dagger$ the intermediate field of $L / K$ corresponding to $\ker \chi$.
Then $K^\dagger / K$ is abelian.
For any $\mathfrak{l} \in S$, by~\cite[Chapter~IV, Corollary of Theorem~6.2]{FV}, the reciprocity map $K_\mathfrak{l} \to \Gal(K_\mathfrak{l}^\ab / K_\mathfrak{l})$ sends the $u$-th higher group $U_{K_\mathfrak{l}}^{(u)}$ of units of $K_\mathfrak{l}$ isomorphically onto $\Gal(K_\mathfrak{l}^\ab / K_\mathfrak{l})^u$ for any integer $u \ge 0$.
Composing with the natural projection $\Gal(K_\mathfrak{l}^\ab / K_\mathfrak{l}) \to \Gal(K^\dagger_\mathfrak{L} / K_\mathfrak{l})$, where $\mathfrak{L}$ denotes a prime of $K^\dagger$ lying over $\mathfrak{l}$, we have the surjection $U_{K_\mathfrak{l}}^{(u)} \to \Gal(K^\dagger_\mathfrak{L} / K_\mathfrak{l})^u$.
Then we obtain the surjection
\[ U_{K_\mathfrak{l}}^{(0)} / U_{K_\mathfrak{l}}^{(u)} \to \Gal(K^\dagger_\mathfrak{L} / K_\mathfrak{l})^{0} / \Gal(K^\dagger_\mathfrak{L} / K_\mathfrak{l})^u \]
for $u \ge 0$.
By assumption, $\Gal(K^\dagger_\mathfrak{L} / K_\mathfrak{l})^u$ is trivial for sufficiently large $u$.
Since $U_{K_\mathfrak{l}}^{(0)} / U_{K_\mathfrak{l}}^{(u)}$ is finite, we know that $\Gal(K^\dagger_\mathfrak{L} / K_\mathfrak{l})^0$ is finite.
Hence there exists a finite extension $K(\mathfrak{l})$ over $K$ in $K^\dagger$ such that the restriction of $\chi$ to $\Gal(L / K(\mathfrak{l}))$ is unramified at all primes of $K(\mathfrak{l})$ lying over $\mathfrak{l}$.

For any $\mathfrak{l} \in \mathfrak{M}_K^\infty$, let $\Lambda$ be the lattice in $K_\mathfrak{l}^\separable$ associated to $\phi$.
Then there exists a finite separable extension $E$ of $K_\mathfrak{l}$ with $\Lambda \subseteq E$.
The action of $G_K$ on $T_\mathfrak{p}(\phi)$ induces the action of $G_{K_\mathfrak{l}}$ on $\Lambda \otimes_A A_\mathfrak{p}$, under which $G_E$ acts trivially.
We set $K(\mathfrak{l}) = E$.
Then the restriction of $\chi$ to $\Gal(L / K(\mathfrak{l}))$ is unramified at all primes of $K(\mathfrak{l})$ lying over $\mathfrak{l}$.
We replace $K$ with the compositum of $K(\mathfrak{l})$ for all $\mathfrak{l} \in S \cup \mathfrak{M}_{K}^\infty$ and may assume that $\chi$ is unramified at every prime of $K$.

Take $\mathfrak{l} \in \mathfrak{M}_K^\fin$ not lying above $S$.
Then the eigenvalue $\alpha_\mathfrak{l}$ of the action of the Frobenius element on $T_\mathfrak{p}(\phi)$ satisfies $\lvert\alpha_\mathfrak{l}\rvert_\infty = q_\mathfrak{l}^{1 / r_\phi}$, where $q_\mathfrak{l}$ is the cardinality of the residue field at $\mathfrak{l}$ and $r_\phi$ is the rank of $\phi$~\cite[Proposition~3]{Takahashi}.
In particular, $\lvert\alpha_\mathfrak{l}\rvert_\infty > 1$.
Thus the image $\chi(G)$ of $G$ under the above $\chi$ is infinite.
Let $M$ be the extension of $K$ satisfying $\chi(G) = \Gal(M / K)$.
Then $M$ is an infinite abelian extension over $K$ unramified at every prime.
Since the constant extension in $K(T_\mathfrak{p}(\phi)) / K$ is finite~\cite[Remark~4.2]{Gekeler}, there exists a finite constant extension $K' / K$ in $M$ such that $M / K'$ is geometric and infinite abelian extension unramified at every prime. 
However, this is impossible since the maximal geometric extension unramified at every prime must be finite (see~\cite[Chapter~VIII, Section~3]{AT}).
\end{proof}

%%%%% Having maximal break outside $\infty$ implies DKF %%%%%
\begin{theorem}\label{FMBOItoDKF}
    A Galois extension $L / K$ with finite maximal break outside $\infty$ is DKF.
\end{theorem}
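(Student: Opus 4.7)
The plan is to reduce the DKF-ness of $L$ to a direct combination of Propositions~\ref{criteria} and~\ref{FMBOItoTF}. First I would take any finite extension $L'$ of $L$ and any Drinfeld $A$-module $\phi$ over $L'$, and descend them to a finite extension of $K$: each generator of $L'/L$ and each coefficient of $\phi$ is algebraic over $K$, so collecting finitely many of these elements yields a finite extension $K'$ of $K$ inside $L'$ with $L' = L K'$ and $\phi$ already defined over $K'$. Being finitely generated over $F$, the field $K'$ is DKF, and $L K'/K'$ is Galois, its Galois group being canonically identified, via restriction to $L$, with the open subgroup $\Gal(L / (L \cap K'))$ of $\Gal(L / K)$.

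Second I would verify that $L K' / K'$ inherits finite maximal break outside $\infty$. Given a finite prime $\mathfrak{l}$ of $K'$ lying above $\mathfrak{p} \in \mathfrak{M}_A$, pick a prime $\mathfrak{Q}$ of $L K'$ above $\mathfrak{l}$ and let $\mathfrak{P}$ be its restriction to $L$. The completion $(L K')_\mathfrak{Q}$ is the local compositum $L_\mathfrak{P} \cdot K'_\mathfrak{l}$, and restriction identifies $H := \Gal((L K')_\mathfrak{Q} / K'_\mathfrak{l})$ with the closed finite-index subgroup $\Gal(L_\mathfrak{P} / (L_\mathfrak{P} \cap K'_\mathfrak{l}))$ of $G := \Gal(L_\mathfrak{P} / K_\mathfrak{p})$. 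The Herbrand formalism for upper ramification filtrations, applied to the finite intermediate extension $M := L_\mathfrak{P} \cap K'_\mathfrak{l}$ of $K_\mathfrak{p}$, rewrites the upper numbering on $H$ as a piecewise-linear reparametrization (via the Herbrand function of $M / K_\mathfrak{p}$) of that on $G$. In particular, finiteness of the maximal break for $G$, which is our hypothesis, forces the same for $H$.

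The remainder is then formal. Proposition~\ref{FMBOItoTF}, applied with $K'$ in place of $K$ and $L K'$ in place of $L$, gives $T_\mathfrak{p}(\phi)^{G_{L K'}} = 0$ for every $\mathfrak{p} \in \mathfrak{M}_A$. By Proposition~\ref{criteria}(1) this is exactly condition~(c) of Proposition~\ref{criteria}(2) for $\phi$ over $K'$ with respect to $L K'$. Since $K'$ is DKF and $L K' / K'$ is Galois, the implication (c)~$\Rightarrow$~(a) of Proposition~\ref{criteria}(2) yields $\phi(L K')_\divisible = 0$, and this is exactly $\phi(L')_\divisible = 0$ because $L' = L K'$. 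As $L'$ and $\phi$ were arbitrary, $L$ is DKF.

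The only non-formal step, and therefore the main obstacle, is the stability of finite maximal break under base change to a finite extension of the base field, carried out in the second step. Everything else is either a routine choice of a suitable finite subextension that absorbs the field of definition of $\phi$ together with $L'$, or a direct invocation of the previous two propositions.
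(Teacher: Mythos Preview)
Your proposal is correct and follows the same strategy as the paper's own proof: descend $\phi$ and $L'$ to a finite subextension $K'/K$ with $L' = LK'$, observe that $L'/K'$ is Galois with finite maximal break outside $\infty$, and then combine Propositions~\ref{FMBOItoTF} and~\ref{criteria}. The paper simply asserts the inheritance of finite maximal break without elaboration, whereas you supply the Herbrand argument; one small correction is that the relevant reparametrization should use the Herbrand function of $K'_\mathfrak{l}$ over the prime $\mathfrak{q}$ of $K$ lying below it (rather than of $M/K_\mathfrak{q}$), since the upper filtration on $H$ is defined relative to the base $K'_\mathfrak{l}$ and the restriction isomorphism $H \cong \Gal(L_\mathfrak{P}/M)$ need not match the two filtrations directly.
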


\begin{proof}
Let $L'$ be a finite Galois extension of $L$ and $\phi$ a Drinfeld $A$-module over $L'$.
There exists a finite subextension $K'$ of $L' / K$ such that $L' / K'$ is Galois and that $\phi$ is defined over $K'$.
From the assumption, $L' / K'$ has finite maximal break outside $\infty$.
By Proposition~\ref{FMBOItoTF}, for any nonzero prime ideal $\mathfrak{p}$ in $A$, we have $T_\mathfrak{p}(\phi)^{G_{L'}} = 0$.
Hence $\phi(L')[\mathfrak{p}^\infty]$ is finite by Proposition~\ref{criteria}~(1).
Now $K'$ is DKF as $K$ is so.
Then $\phi(L')_\divisible = 0$ by (2) of the same proposition.
Therefore $L$ is DKF.
\end{proof}

%%%%% Constant extension %%%%%
\begin{corollary}\label{ConstExt}
Let $L$ be as in Theorem~\emph{\ref{FMBOItoDKF}}.
Then $L \overline{\mathbb{F}_q}$ is DKF.
\end{corollary}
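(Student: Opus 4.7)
The plan is to apply Theorem~\ref{FMBOItoDKF} directly to the extension $L\overline{\mathbb{F}_q}/K$ in place of $L/K$. This reduces the corollary to verifying two conditions: (a) $L\overline{\mathbb{F}_q}/K$ is Galois, and (b) it has finite maximal break outside $\infty$.

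Condition (a) is straightforward: $L/K$ is Galois by hypothesis, $\overline{\mathbb{F}_q}/\mathbb{F}_q$ is Galois (being the separable closure of a perfect field), so the compositum $L\overline{\mathbb{F}_q}$ is Galois over $K$. For condition (b), I would fix $\mathfrak{p} \in \mathfrak{M}_A$ together with primes $\mathfrak{P}$ of $L\overline{\mathbb{F}_q}$ and $\mathfrak{P}' = \mathfrak{P} \cap L$ lying over $\mathfrak{p}$, and study the tower $K_\mathfrak{p} \subseteq L_{\mathfrak{P}'} \subseteq (L\overline{\mathbb{F}_q})_\mathfrak{P}$. The key observation is that $L\overline{\mathbb{F}_q}/L$ is a constant extension, hence unramified at every finite prime, so the top step $(L\overline{\mathbb{F}_q})_\mathfrak{P}/L_{\mathfrak{P}'}$ is unramified.

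Writing $G = \Gal((L\overline{\mathbb{F}_q})_\mathfrak{P}/K_\mathfrak{p})$ and $N = \Gal((L\overline{\mathbb{F}_q})_\mathfrak{P}/L_{\mathfrak{P}'})$, Herbrand's theorem (the compatibility of the upper numbering with quotients, applied via inverse limits in the infinite setting) shows that $G^u$ surjects onto $(G/N)^u = \Gal(L_{\mathfrak{P}'}/K_\mathfrak{p})^u$ for every $u \geq -1$. Since $N$ acts faithfully on the residue field of $(L\overline{\mathbb{F}_q})_\mathfrak{P}$ by unramifiedness, the inertia $G^0$ intersects $N$ trivially, so $G^u \cap N = 1$ for $u \geq 0$, yielding an isomorphism $G^u \cong \Gal(L_{\mathfrak{P}'}/K_\mathfrak{p})^u$ in that range. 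Consequently the finite maximal break of $L_{\mathfrak{P}'}/K_\mathfrak{p}$ provided by the hypothesis transports to $(L\overline{\mathbb{F}_q})_\mathfrak{P}/K_\mathfrak{p}$, and Theorem~\ref{FMBOItoDKF} concludes that $L\overline{\mathbb{F}_q}$ is DKF.

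The one nontrivial check is this Herbrand comparison in the infinite-extension setting, which confirms that adjoining the constant (hence unramified) extension $\overline{\mathbb{F}_q}$ does not introduce any new upper ramification breaks; once this is granted, the corollary follows at once from the theorem.
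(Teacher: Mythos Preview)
Your proposal is correct and follows exactly the approach the paper takes: show that $L\overline{\mathbb{F}_q}/K$ still has finite maximal break outside $\infty$ and then invoke Theorem~\ref{FMBOItoDKF}. The paper states this in a single sentence without justification, whereas you supply the details (unramifiedness of constant extensions, Herbrand's compatibility of upper numbering with quotients, and the fact that $N\cap G^0=1$ forces $G^u\cong (G/N)^u$ for $u\ge 0$); these details are accurate and nothing further is needed.
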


\begin{proof}
We see that $L \overline{\mathbb{F}_q} / K$ has finite maximal break outside $\infty$ and apply Theorem~\ref{FMBOItoDKF}.
\end{proof}

\section{Construction of Drinfeld-Kummer-faithful fields from Drinfeld modules}

Recall that $\mathfrak{M}^{\mathrm{f}}_E$ denotes the set of primes of $E$ not lying above $\infty$ for any finite field extension $E$ over $F$.
For $\mathfrak{l} \in \mathfrak{M}^{\mathrm{f}}_E$, denote by $v_\mathfrak{l}$ the valuation corresponding to $\mathfrak{l}$ normalized so that $v_\mathfrak{l}(E^\times) = \mathbb{Z}$.
We uniquely extend $v_\mathfrak{l}$ to $E_\mathfrak{l}^\separable$.
We use the notation $v_L$ for the valuation corresponding to a finite extension $L$ of $E_\mathfrak{l}$ normalized so that $v_L(L^\times) = \mathbb{Z}$.
Denote by $\mathcal{O}_\mathfrak{l}$ the valuation ring corresponding to $E_\mathfrak{l}$.
To simplify the notation, for a principal ideal $\mathfrak{a}$ of $A$, we will write again $\mathfrak{a}$ for its generator when the choice of the generator poses no problem.

The aim of this section is to give some examples of DKF fields which are infinitely generated over $F$.
We will show that the field obtained by adjoining the torsion points, of bounded order, of Drinfeld modules of bounded rank with ``not too bad" reduction, by referring to the strategy in Section~5 of the paper of Rosen~\cite{Rosen03}.
Because we are only interested in the finiteness of the maximal breaks, our estimate in this section will seem far from being optimal.
For more accurate estimates, see~\cite{Taguchi92} or~\cite{CL13EIT}.

%%%%% Proposition 1 %%%%%
\begin{proposition}\label{prop1}
Let $K$ be a finite extension of $F$.
Let $\mathfrak{p} \in \mathfrak{M}_A$ and $\mathfrak{l} \in \mathfrak{M}^{\mathrm{f}}_K$.
Let $m$, $r$, and $N$ be non-negative integers.
Assume that $\mathfrak{p}^m$ is a principal ideal.
Let $a(\mathfrak{p}^m)$ be the coefficient of the initial term of $\phi_{\mathfrak{p}^m}(X)$ for a Drinfeld $A$-module $\phi$ over $K_\mathfrak{l}$.
Then there is a constant $C$ depending on $\mathfrak{p}$, $\mathfrak{l}$, $m$, $r$, and $N$ such that, for any Drinfeld $A$-module $\phi$ over $\mathcal{O}_\mathfrak{l}$ of rank at most $r$ with $v_\mathfrak{l}(a(\mathfrak{p}^m)) \le N$, the maximal break of $K_\mathfrak{l}(\phi[\mathfrak{p}^m]) / K_\mathfrak{l}$ is at most $C$.
\end{proposition}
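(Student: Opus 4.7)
The plan is to bound the maximal ramification break of $L := K_\mathfrak{l}(\phi[\mathfrak{p}^m])/K_\mathfrak{l}$ by obtaining a uniform bound on the discriminant of an explicit polynomial whose splitting field is $L$, and then invoking standard local ramification theory. Since $\Gal(L/K_\mathfrak{l})$ acts faithfully on the free $A/\mathfrak{p}^m$-module $\phi[\mathfrak{p}^m]$, it embeds into $GL_r(A/\mathfrak{p}^m)$, so $[L:K_\mathfrak{l}]$ is bounded in terms of $r, m, \mathfrak{p}$ alone. Note that since $K_\mathfrak{l}$ has positive characteristic, infinitely many extensions of a given degree exist (for instance, Artin-Schreier extensions with arbitrarily large breaks), so this degree bound alone is insufficient and the hypothesis $v_\mathfrak{l}(a(\mathfrak{p}^m)) \le N$ must enter through a discriminant estimate.

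The core computation is an explicit formula for the discriminant of $f(X) := \phi_a(X)/X$, where $a \in A$ is a generator of $\mathfrak{p}^m$. Writing $\phi_a(X) = \sum_{i=0}^{d'} c_i X^{q^i}$ with $c_0 = a$, $c_{d'} = a(\mathfrak{p}^m)$, and $d' \le rm \deg \mathfrak{p}$, the decisive input is that in characteristic $p$ the formal derivative $\phi_a'(X) = a$ is constant. Hence at each nonzero root $\alpha$ of $\phi_a$ one has $f'(\alpha) = \phi_a'(\alpha)/\alpha = a/\alpha$, while Vieta's formula for $f$ gives $\prod_\alpha \alpha = \pm a/c_{d'}$. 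Substituting into the standard identity $\operatorname{disc}(f) = \pm c_{d'}^{\deg f - 2}\prod_\alpha f'(\alpha)$ collapses everything to
\[ \operatorname{disc}(f) \;=\; \pm\, c_{d'}^{\,q^{d'}-2}\, a^{\,q^{d'}-2}, \]
so $v_\mathfrak{l}(\operatorname{disc}(f)) \le (q^{d'}-2)(N + v_\mathfrak{l}(a))$, a bound depending only on $\mathfrak{p}, \mathfrak{l}, m, r, N$.

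To transfer this to a bound on $v_L(\mathfrak{d}_{L/K_\mathfrak{l}})$, I would first replace $f$ by the monic polynomial $\tilde f(X) := c_{d'}^{q^{d'}-2} f(X/c_{d'})$, which has integral coefficients, whose roots $\beta_i := c_{d'}\alpha_i$ lie in $\mathcal{O}_L$ and generate $L$ over $K_\mathfrak{l}$, and whose discriminant differs from $\operatorname{disc}(f)$ only by a controlled power of $c_{d'}$. Working through the tower $K_\mathfrak{l} \subseteq K_\mathfrak{l}(\beta_1) \subseteq K_\mathfrak{l}(\beta_1,\beta_2) \subseteq \cdots \subseteq L$, at each stage the minimal polynomial of the new generator over the previous field divides $\tilde f$, so its discriminant has valuation uniformly bounded by that of $\tilde f$. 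Multiplicativity of the different in the tower then yields a uniform bound on $v_L(\mathfrak{d}_{L/K_\mathfrak{l}})$. Combined with the identity $v_L(\mathfrak{d}_{L/K_\mathfrak{l}}) = \sum_{i\ge 0}(|\Gal(L/K_\mathfrak{l})_i| - 1)$ and the Hasse-Herbrand formula, this bounds the maximal upper ramification break.

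The main technical obstacle is the tower step: at each intermediate field one must verify that the discriminant of the minimal polynomial of the next generator remains controlled even as the base grows. I expect this to be handled by observing that (i) $\tilde f$ remains a polynomial with integral coefficients over every intermediate field, with discriminant bounded by the base-field discriminant times a controlled multiplicity, and (ii) the number of steps in the tower is bounded by $\deg \tilde f = q^{d'}-1$, itself bounded in the stated parameters; together these keep the cumulative valuation of the different under control.
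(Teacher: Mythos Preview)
Your approach is correct and genuinely different from the paper's. The paper works directly with the Newton polygon of $\phi_{\mathfrak{p}^m}(X)$: the linear coefficient has valuation $v_\mathfrak{l}(\mathfrak{p}^m)$ and the leading coefficient has valuation at most $N$, which traps $v_\mathfrak{l}(x)$ for every nonzero torsion point $x$ in an explicit interval; for any nontrivial $\sigma$ in the ramification filtration one then picks an $x$ with $x^\sigma\ne x$, scales it into $\mathcal{O}_L$ by some $b\in K_\mathfrak{l}$, and reads off the bound on the break from $v_L((bx)^\sigma-bx)$. No discriminants or towers appear. Your argument instead exploits the additive-polynomial identity $\phi_a'(X)=a$ to obtain the pleasant closed form $\operatorname{disc}(\phi_a(X)/X)=\pm(a\,c_{d'})^{q^{d'}-2}$, a fact the paper does not isolate. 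The cost is the subsequent bookkeeping (monic rescaling, tower argument for the different); this is sound as you outline it---Gauss's lemma makes the monic factors of $\tilde f$ over each intermediate field integral, and $\operatorname{disc}(\tilde f)=\pm\operatorname{disc}(g)\operatorname{disc}(h)\operatorname{Res}(g,h)^2$ shows $v(\operatorname{disc}(g))\le v(\operatorname{disc}(\tilde f))$ at every step---but it is heavier than necessary. Once you have integral generators $\beta_i\in\mathcal{O}_L$ you can bypass the tower: for any $\sigma\ne1$ some $\beta_i^\sigma=\beta_j\ne\beta_i$, and since $(\beta_j-\beta_i)^2$ is one of the factors of $\operatorname{disc}(\tilde f)$ in $\mathcal{O}_L$, one gets $v_L(\beta_i^\sigma-\beta_i)\le e_{L/K_\mathfrak{l}}\,v_\mathfrak{l}(\operatorname{disc}(\tilde f))$, bounding the lower (hence upper) break in one stroke---essentially the paper's endgame transplanted onto your discriminant formula.
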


\begin{proof}
Looking at the Newton polygon of $\phi_{\mathfrak{p}^m}(X)$ at $\mathfrak{l}$, for a nonzero $x \in \phi[\mathfrak{p}^m]$, we have
\[ v_\mathfrak{l}(x) \le - \frac{0 - v_\mathfrak{l}(\mathfrak{p}^m)}{q - 1} = \frac{v_\mathfrak{l}(\mathfrak{p}^m)}{q - 1} \]
and
\[ v_\mathfrak{l}(x) \ge - \frac{v_\mathfrak{l}(a(\mathfrak{p}^m)) - 0}{q^{r_\phi \deg(\mathfrak{p}^m)} - q^{r_\phi \deg(\mathfrak{p}^m) - 1}} = \frac{- v_\mathfrak{l}(a(\mathfrak{p}^m))}{q^{r_\phi \deg(\mathfrak{p}^m) - 1} (q - 1)}. \]
Here $r_\phi$ denotes the rank of $\phi$.
Suppose that $G_{K_\mathfrak{l}}^u \neq 1$.
For $\sigma \neq 1$ in $G_{K_\mathfrak{l}}^u$, there exists $x$ in $\phi[\mathfrak{p}^m]$ such that $x^\sigma \neq x$.
Take $b \in K_\mathfrak{l}$ with $- v_\mathfrak{l}(x) \le v_\mathfrak{l}(b) < - v_\mathfrak{l}(x) +1$.
Then $v_\mathfrak{l}(b x) \ge 0$ and
\begin{align*}
    v_L((b x)^\sigma - b x) &= e_{L / K_\mathfrak{l}} v_\mathfrak{l}((b x)^\sigma - b x) \\
    &= e_{L / K_\mathfrak{l}} (v_\mathfrak{l}(x^\sigma - x) + v_\mathfrak{l}(b)) \\
    &\le e_{L / K_\mathfrak{l}} \left( \frac{v_\mathfrak{l}(\mathfrak{p}^m)}{q - 1} + \left(- \frac{- v_\mathfrak{l}(a(\mathfrak{p}^m))}{q^{r_\phi \deg(\mathfrak{p}^m) - 1} (q - 1)} \right) +1 \right).
\end{align*}
Here $L = K_\mathfrak{l}(\phi[\mathfrak{p}^m])$ and $e_{L / K_\mathfrak{l}}$ denotes the ramification index of $L / K_\mathfrak{l}$.
Thus
\[ u \le  e_{L / K_\mathfrak{l}} \left( \frac{v_\mathfrak{l}(\mathfrak{p}^m)}{q - 1} + \frac{v_\mathfrak{l}(a(\mathfrak{p}^m))}{q^{r_\phi \deg(\mathfrak{p}^m) - 1} (q - 1)} +1 \right) - 1.\]
Since $e_{L / K_\mathfrak{l}}$ is bounded in terms of $\mathfrak{p}$, $m$, and $r$ (An obvious bound is $(q^{r \deg(\mathfrak{p}^m)})!$ because $L / K_\mathfrak{l}$ is the splitting field of the polynomial $\phi_{\mathfrak{p}^m}(X)$, whose degree is at most $q^{r \deg(\mathfrak{p}^m)}$), we obtain a desired upper bound of $u$.
\end{proof}

From now on, we assume $F = \mathbb{F}_q(t)$, $\infty = (1 / t)$, and $A = \mathbb{F}_q[t]$.
For a Drinfeld $A$-module $\phi$ over $K$ with stable reduction, let $(\psi, \Gamma)$ be the Tate uniformization of $\phi$ at $\mathfrak{l} \in \mathfrak{M}^{\mathrm{f}}_K$.
Here $\psi$ is a Drinfeld $A$-module over $\mathcal{O}_\mathfrak{l}$ with good reduction and $\Gamma$ is a $\psi$-lattice, which means a finitely generated discrete projective $A$-submodule of $\psi(K_\mathfrak{l}^\separable)$ stable under the action of $G_{K_\mathfrak{l}}$.
Such a pair is called a \textit{Tate datum}.
The power series
\[ e_\Gamma(X) = X \prod_{\gamma \in \Gamma \setminus \{0\}} \left(1 - \frac{X}{\gamma}\right) \]
defines an $\mathbb{F}_q$-linear entire function on $\psi(K_\mathfrak{l}^\separable)$ and satisfies $e_\Gamma \psi_a = \phi_a e_\Gamma$ for all $a \in A$ (we can see $e_\Gamma$ as an element in $\mathcal{O}_\mathfrak{l}\{\{\tau\}\}$, the non-commutative ring of formal power series in $\tau$ with coefficients in $\mathcal{O}_\mathfrak{l}$, whose multiplication is determined by $\tau x = x^q \tau$ for $x \in \mathcal{O}_\mathfrak{l}$).
For positive integers $r_\psi$ and $r_\Gamma$, there is a natural one-to-one correspondence (see~\cite[Section 7]{Drinfeld})
\begin{align*}
    &\left\{ \begin{array}{l}
        \text{Drinfeld $A$-modules } \phi \text{ of rank }  r_\psi + r_\Gamma \text{ over} \\
         K_\mathfrak{l} \text{ with stable reduction of rank } r_\psi
    \end{array} \right\} \bigg/ (K_\mathfrak{l}\text{-isom.}) \\
    &\stackrel{1 : 1}{\longleftrightarrow} \left\{ \begin{array}{l}
        \text{Tate data } (\psi, \Gamma), \text{ where } \psi \text{ is a Drinfeld}\\
        \text{$A$-module over } \mathcal{O}_\mathfrak{l} \text{ with good reduction of} \\
        \text{rank } r_\psi \text{ and } \Gamma \text{ is a $\psi$-lattice of rank } r_\Gamma
    \end{array} \right\} \Bigg/ (K_\mathfrak{l}\text{-isom.}).
\end{align*}
Note that two Tate data $(\psi, \Gamma)$ and $(\psi', \Gamma')$ are $K_\mathfrak{l}$-isomorphic if there exists an isomorphism from $\psi$ to $\psi'$ over $K_\mathfrak{l}$ which induces an isomorphism from $\Gamma$ to $\Gamma'$.

We introduce the notion of the size of a Tate datum $(\psi, \Gamma)$.
We notice that, as we have assumed $A = \mathbb{F}_q[t]$, any $\psi$-lattice $\Gamma$ is a finitely generated free $A$-submodule in $\psi(K_\mathfrak{l}^\separable)$.
Since $\Gamma$ is discrete and $\psi$ has good reduction, we have $v_\mathfrak{l}(\gamma) < 0$ and $v_\mathfrak{l}(\psi_a(\gamma)) = q^{r_\psi \deg(a)} v_\mathfrak{l}(\gamma)$ for $\gamma \in \Gamma \setminus \{ 0 \}$ and $a \in A \setminus \{ 0 \}$.
According to Gardeyn~\cite[Section~1]{Gardeyn}, we define
\[ \lVert\gamma\rVert_\mathfrak{l} = (- v_{\mathfrak{l}_0}(\gamma))^{1 / r_\psi} \]
for $\gamma \in \Gamma \setminus \{0\}$ and $\lVert0\rVert_\mathfrak{l} = 0$.
Here $\mathfrak{l}_0 \in \mathfrak{M}_A$ lies under $\mathfrak{l}$.
Then $\lVert-\rVert_\mathfrak{l}$ is a $G_{K_\mathfrak{l}}$-invariant norm relative to the absolute value $\lvert-\rvert_\infty = q^{\deg(-)}$ at $\infty$ and is unchanged under a finite extension of $K$.
We recall the notion of successive minima of an $A$-lattice from the paper of Taguchi~\cite[Section~4]{Taguchi93} in a general setting.
Notice that an $A$-lattice means a finitely generated discrete free $A$-module with respect to some norm $\lVert-\rVert$.

%%%%% Definitions of successive minima and successive minimum basis %%%%%
\begin{definition}
Let $\Lambda$ be an $A$-lattice with respect to a norm $\lVert-\rVert$.
For a real number $c$, we set
\[ B(c) = \{ \lambda \in \Lambda \mid \lVert\lambda\rVert \le c\}. \]
Let $r$ be the rank of $\Lambda$.
For $1 \le i \le r$, the \textit{$i$-th successive minimum} $c_i$ is the smallest real number $c$ such that $B(c)$ contains at least $i$ elements of $\Lambda$ which are linearly independent over $A$.
An $A$-basis $(\lambda_i)_{1 \le i \le r}$ of $\Lambda$ is said to be a \textit{successive minimum basis} if it satisfies $\lVert\lambda_i\rVert = c_i$ for all $i$.
\end{definition}

Note that the set $B(c)$ is finite for any real number $c$.
We easily verify that the successive minima and the successive minimum basis always exist.
A useful property of successive minima is the next lemma.

%%%%% Criteria for successive minima by Taguchi %%%%%
\begin{lemma}[{\cite[Lemma~4.2]{Taguchi93}}]
Let $\Lambda$ be an $A$-lattice of rank $r$ with respect to a norm $\lVert-\rVert$ and $(\lambda_i)_{1 \le i \le r}$ an $A$-basis of $\Lambda$ such that $\lVert\lambda_1\rVert \le \dotsb \le \lVert\lambda_r\rVert$.
Then the following conditions are equivalent$:$
\begin{enumerate}[\textup{(\alph{enumi})}]
    \item $(\lambda_i)_{1 \le i \le r}$ is a successive minimum basis.
    \item $\left\lVert\sum a_i \lambda_i\right\rVert = \max \{\lVert a_i \lambda_i \rVert\}$ for all $(a_i)_{1 \le i \le r} \in A^r$.
\end{enumerate}
\end{lemma}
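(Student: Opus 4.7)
The plan is to handle the two implications separately: (b) $\Rightarrow$ (a) is essentially formal from the discrete image of $|\cdot|_\infty$ on $A$, while (a) $\Rightarrow$ (b) requires a reduction via the Euclidean algorithm in $A = \mathbb{F}_q[t]$ combined with a careful appeal to the successive-minimum property.

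For (b) $\Rightarrow$ (a), the bound $c_i \le \lVert\lambda_i\rVert$ is immediate from $\lambda_1,\dots,\lambda_i \in B(\lVert\lambda_i\rVert)$. For the reverse, suppose $\mu_1,\dots,\mu_i \in B(c)$ are linearly independent with $c < \lVert\lambda_i\rVert$; writing $\mu_k = \sum_\ell a_{k,\ell}\lambda_\ell$ and applying (b) yields $|a_{k,\ell}|_\infty\lVert\lambda_\ell\rVert \le c$ for all $k,\ell$. For $\ell \ge i$ one has $\lVert\lambda_\ell\rVert \ge \lVert\lambda_i\rVert > c$, so $|a_{k,\ell}|_\infty < 1$; since $|a|_\infty \in \{0\}\cup q^{\mathbb{Z}_{\ge 0}}$ for $a \in A$, this forces $a_{k,\ell} = 0$. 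Hence every $\mu_k$ lies in the $A$-span of $\lambda_1,\dots,\lambda_{i-1}$, contradicting linear independence.

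For (a) $\Rightarrow$ (b), the inequality $\lVert\sum a_i\lambda_i\rVert \le M := \max_i |a_i|_\infty\lVert\lambda_i\rVert$ is the non-archimedean triangle inequality, so it remains to prove $\lVert\mu\rVert \ge M$. Arguing by contradiction, assume $\mu = \sum a_i\lambda_i \in \Lambda$ satisfies $\lVert\mu\rVert < M$, and let $j$ be the \emph{largest} index with $|a_j|_\infty\lVert\lambda_j\rVert = M$. For each $i \ne j$, apply Euclidean division in $A$ to write $a_i = p_i a_j + r_i$ with $\deg r_i < \deg a_j$ or $r_i = 0$; for $i > j$, the chain $|a_i|_\infty\lVert\lambda_i\rVert < M = |a_j|_\infty\lVert\lambda_j\rVert \le |a_j|_\infty\lVert\lambda_i\rVert$ forces $\deg a_i < \deg a_j$, so $p_i = 0$ and $r_i = a_i$. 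Setting $\tilde\lambda_j := \lambda_j + \sum_{i\ne j}p_i\lambda_i \in \Lambda$, one has $\mu = a_j\tilde\lambda_j + \sum_{i\ne j}r_i\lambda_i$, and the degree estimate $\deg r_i < \deg a_j$ together with $\lVert\lambda_i\rVert \le \lVert\lambda_j\rVert$ for $i < j$ gives $|r_i|_\infty\lVert\lambda_i\rVert < M$ for every $i \ne j$.

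The crux is to establish $\lVert\tilde\lambda_j\rVert = \lVert\lambda_j\rVert$. Granting this, the displayed expression for $\mu$ has first term of norm exactly $M$ strictly dominating the remaining terms, so the ultrametric equality yields $\lVert\mu\rVert = M$, contradicting $\lVert\mu\rVert < M$. The bound $\lVert\tilde\lambda_j\rVert \le \lVert\lambda_j\rVert$ follows from rearranging $|a_i|_\infty\lVert\lambda_i\rVert \le M$ into $|p_i|_\infty\lVert\lambda_i\rVert \le \lVert\lambda_j\rVert$ for $i < j$ and applying the non-archimedean triangle inequality. The reverse bound $\lVert\tilde\lambda_j\rVert \ge \lVert\lambda_j\rVert = c_j$ is where hypothesis (a) enters: if $\lVert\tilde\lambda_j\rVert < c_j$, setting $i_0 := \min\{i : c_i = c_j\}$, the vectors $\tilde\lambda_j, \lambda_1, \dots, \lambda_{i_0-1}$ form $i_0$ linearly independent elements (the $\lambda_j$-component of $\tilde\lambda_j$ is $1$ and $j \ge i_0$) all of norm strictly less than $c_{i_0}$, contradicting the definition of $c_{i_0}$. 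I expect the subtlest point to be this tie-breaking via $i_0$: because the successive minima need not be strictly increasing, the naive choice $\tilde\lambda_j, \lambda_1, \dots, \lambda_{j-1}$ may fail to give a strict inequality, and the passage to $i_0$ is essential.
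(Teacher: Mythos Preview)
The paper does not supply its own proof of this lemma: it is quoted verbatim from \cite[Lemma~4.2]{Taguchi93} and used as a black box, so there is nothing in the present paper to compare against. Your argument is self-contained and correct; in particular, your handling of the tie-breaking via $i_0 = \min\{i : c_i = c_j\}$ is the right way to extract a contradiction from the successive-minimum hypothesis when the $c_i$ are not strictly increasing, and the Euclidean reduction is legitimate because the paper has already specialized to $A = \mathbb{F}_q[t]$ before stating the lemma. One minor point worth making explicit for the reader: the proof uses throughout that the norm $\lVert-\rVert$ is ultrametric and compatible with $\lvert-\rvert_\infty$ (i.e.\ $\lVert a\lambda\rVert = \lvert a\rvert_\infty \lVert\lambda\rVert$ and $\lVert\lambda+\mu\rVert \le \max(\lVert\lambda\rVert,\lVert\mu\rVert)$ with equality when the terms have distinct norms); this is implicit in the phrase ``norm relative to $\lvert-\rvert_\infty$'' used in the paper and in \cite{Taguchi93}, but you might state it once at the outset.
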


For an $A$-lattice $\Lambda$ with successive minima $c_1, \dotsc, c_r$, we define the \textit{covolume} $D(\Lambda)$ of $\Lambda$ as their product, i.e.,
\begin{equation*}
    D(\Lambda) = \prod_{i = 1}^r c_i.
\end{equation*}

The next lemma is easy.

%%%%% Estimation of the last successive minimum %%%%%
\begin{lemma}\label{EstimationOfTheLastSM}
Let $\Lambda$ be an $A$-lattice of rank $r$ and $(c_i)_{1 \le i \le r}$ its successive minima.
If $\varepsilon$ is a real positive number with $\varepsilon \le c_1$, then we have $c_r \le \varepsilon^{- (r - 1)} D(\Lambda)$.
\end{lemma}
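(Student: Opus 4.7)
The proof should be essentially a one-line estimate, since the successive minima are ordered and $D(\Lambda)$ is their product. The plan is to isolate $c_r$ in the definition $D(\Lambda) = \prod_{i=1}^{r} c_i$ and bound the remaining factors from below by $\varepsilon$.

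First I would record the ordering $c_1 \le c_2 \le \cdots \le c_r$, which is built into the definition of successive minima: the $i$-th successive minimum is the smallest real number $c$ such that $B(c)$ contains at least $i$ linearly independent elements, so increasing $i$ can only increase $c_i$. Combined with the hypothesis $\varepsilon \le c_1$, this gives $\varepsilon \le c_i$ for every $i = 1, \dots, r-1$.

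Next I would rewrite the covolume as
\[
D(\Lambda) = c_r \cdot \prod_{i=1}^{r-1} c_i \ge c_r \cdot \varepsilon^{r-1},
\]
using the lower bounds from the previous step on the first $r-1$ factors. Dividing through by $\varepsilon^{r-1}$ yields $c_r \le \varepsilon^{-(r-1)} D(\Lambda)$, which is the desired inequality.

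There is no real obstacle here: the statement is a direct book-keeping consequence of the ordering of the successive minima together with the definition of $D(\Lambda)$, and no properties specific to $A$-lattices beyond the existence and monotonicity of the $c_i$ are needed.
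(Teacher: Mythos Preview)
Your argument is correct and is essentially identical to the paper's one-line proof, which writes $D(\Lambda) \ge c_1^{\,r-1} c_r$ using the ordering of the successive minima and then uses $c_1 \ge \varepsilon$ to conclude.
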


\begin{proof}
Since $D(\Lambda) \ge c_1^{r - 1} c_r$, we have $c_r \le c_1^{- (r - 1)} D(\Lambda) \le \varepsilon^{- (r - 1)} D(\Lambda)$.
\end{proof}

Now we turn to the situation of a Drinfeld $A$-module $\phi$ of rank at most $r$ over $K$.
From~\cite[Proposition~7.1]{Drinfeld}, there is a tamely ramified extension $K'$ of $K$ of ramification index dividing $d = \mathrm{lcm}\{q^i - 1 \mid 1 \le i \le r\}$ such that $\phi$ has stable reduction at $\mathfrak{l}' \in \mathfrak{M}_{K'}^\fin$ lying above $\mathfrak{l}$.
Let $(\psi, \Gamma)$ be the Tate uniformization at $\mathfrak{l}'$.
Recall that $\Gamma$ is an $A$-lattice under the action of $A$ via $\psi$ and the norm $\lVert-\rVert = \lVert-\rVert_{\mathfrak{l}'}$.
Put $D(\phi, \mathfrak{l}) = D(\Gamma)$.
Notice that it is independent of the choices of an extension $K'$, a prime $\mathfrak{l}'$, and a Tate datum $(\psi, \Gamma)$.
We also remark that, for a Galois extension $L / K$, the maximal break of $L K' / K'$ is at most $d$ times that of $L / K$.
We write $r_\phi$ for the rank of $\phi$, $r_\psi$ for the rank of $\psi$, and $r_\Gamma$ for the rank of $\Gamma$.

%%%%% Proposition 2 %%%%%
\begin{proposition}\label{prop2}
Let $K$ be a finite extension of $F$.
Let $\mathfrak{p} \in \mathfrak{M}_A$ and $\mathfrak{l} \in \mathfrak{M}^{\mathrm{f}}_K$.
Let $m$, $r$, and $N$ be non-negative integers.
Then there is a constant $C$ depending on $\mathfrak{p}$, $\mathfrak{l}$, $m$, $r$, and $N$ such that, for any Drinfeld $A$-module $\phi$ over $\mathcal{O}_\mathfrak{l}$ of rank at most $r$ with $D(\phi, \mathfrak{l}) \le N$, the maximal break of $K_\mathfrak{l}(\phi[\mathfrak{p}^m]) / K_\mathfrak{l}$ is at most $C$.
\end{proposition}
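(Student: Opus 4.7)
The plan is to reduce to the good-reduction case of Proposition~\ref{prop1} via the Tate uniformization.
After passing to a tamely ramified extension $K'/K$ of ramification index dividing $d = \mathrm{lcm}\{q^i - 1 \mid 1 \le i \le r\}$ (by~\cite[Proposition~7.1]{Drinfeld}), $\phi$ acquires stable reduction at some prime $\mathfrak{l}'$ above $\mathfrak{l}$ with Tate datum $(\psi, \Gamma)$, where $\psi$ has good reduction of rank $r_\psi$ and $\Gamma$ has rank $r_\Gamma$, with $r_\psi + r_\Gamma \le r$.
By the remark preceding the proposition, it suffices to bound the maximal break over $K'_{\mathfrak{l}'}$.
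The short exact sequence $0 \to \psi[\mathfrak{p}^m] \to \psi_{\mathfrak{p}^m}^{-1}(\Gamma) \to \phi[\mathfrak{p}^m] \to 0$ induced by $e_\Gamma$ shows that $K'_{\mathfrak{l}'}(\phi[\mathfrak{p}^m]) \subseteq K'_{\mathfrak{l}'}(\psi[\mathfrak{p}^m], y_1, \dotsc, y_{r_\Gamma})$, where $(\gamma_i)$ is a successive minimum basis of $\Gamma$ and $y_i \in \psi_{\mathfrak{p}^m}^{-1}(\gamma_i)$.

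The first key step is to bound the successive minima $c_i$ uniformly in $\phi$.
By the characterization of successive minimum bases, an element $\sum a_j \gamma_j \in \Gamma$ has norm exactly $c_1$ only if $a_j = 0$ for all $j$ with $c_j > c_1$ and $a_j \in \mathbb{F}_q$ for $j$ with $c_j = c_1$; thus $\{\gamma \in \Gamma : \lVert \gamma \rVert_{\mathfrak{l}'} = c_1\}$ has cardinality at most $q^{r_\Gamma} - 1 \le q^r$.
Since the Galois action on $\Gamma$ preserves the norm, the orbit of $\gamma_1$ has size at most $q^r$, so $[K'_{\mathfrak{l}'}(\gamma_1) : K'_{\mathfrak{l}'}] \le q^r$, and consequently $c_1^{r_\psi} = -v_{\mathfrak{l}_0}(\gamma_1) \ge 1/e_{K'_{\mathfrak{l}'}(\gamma_1)/F_{\mathfrak{l}_0}}$ is bounded below by a positive constant depending only on $\mathfrak{l}$, $r$, and $q$.
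Lemma~\ref{EstimationOfTheLastSM} then gives $c_{r_\Gamma} \le c_1^{-(r-1)} N$, a uniform upper bound on every $c_i$, and the orbit sizes of the other $\gamma_i$ are likewise bounded in terms of the now-controlled ratios $c_i/c_1$; hence the extension $K''_{\mathfrak{l}''}/K'_{\mathfrak{l}'}$ over which every $\gamma_i$ becomes rational has degree bounded in terms of $\mathfrak{l}$, $r$, $q$, and $N$.

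Over $K''_{\mathfrak{l}''}$ one finishes by reproducing the argument of Proposition~\ref{prop1}.
For the $\psi[\mathfrak{p}^m]$ contribution, Proposition~\ref{prop1} applied to $\psi$ (where $v_{\mathfrak{l}''}(a_\psi(\mathfrak{p}^m)) = 0$ by good reduction) already gives a uniform bound.
For each $y_i$, the Newton polygon of $\psi_{\mathfrak{p}^m}(X) - \gamma_i$ yields $v_{\mathfrak{l}''}(y_i) \ge v_{\mathfrak{l}''}(\gamma_i)/q^{r_\psi \deg \mathfrak{p}^m}$, bounded in terms of $c_{r_\Gamma}$.
For $\sigma$ in the relevant upper-numbering ramification group, $\sigma$ fixes $\gamma_i$, so $y_i^\sigma - y_i \in \psi[\mathfrak{p}^m]$, whose valuation is controlled by Proposition~\ref{prop1}.
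Shifting $y_i$ by $b \in K''_{\mathfrak{l}''}$ with $v_{\mathfrak{l}''}(b) \approx -v_{\mathfrak{l}''}(y_i)$ to make $b y_i$ integral, the usual computation furnishes an upper bound on any upper-numbering ramification index $u$ in terms of the fixed data.
Translating back through the tower $K_\mathfrak{l} \subseteq K'_{\mathfrak{l}'} \subseteq K''_{\mathfrak{l}''}$ of bounded-ramification extensions then yields the desired constant $C = C(\mathfrak{p}, \mathfrak{l}, m, r, N)$.

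The main obstacle, and the decisive estimate, is the a-priori bound $[K'_{\mathfrak{l}'}(\gamma_1) : K'_{\mathfrak{l}'}] \le q^r$: it rests on the observation that only $\mathbb{F}_q$-scalings can preserve the norm of a successive-minimum-basis element, which in turn follows from the strict scaling property $\lVert \psi_a \gamma \rVert_{\mathfrak{l}'} = q^{\deg a} \lVert \gamma \rVert_{\mathfrak{l}'}$ of the $\psi$-action.
It is this property that forces $c_1$ to admit a uniform lower bound, and via Lemma~\ref{EstimationOfTheLastSM} this cascades into uniform control of all successive minima and hence of the maximal break.
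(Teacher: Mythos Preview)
Your route differs from the paper's: the paper reduces directly to Proposition~\ref{prop1} applied to $\phi$ itself by bounding $v_\mathfrak{l}(a(\mathfrak{p}^m))$ through the identity $a(\mathfrak{p}^m) = \mathfrak{p}^m \big/ \prod_z e_\Gamma(z)$ (product over nonzero representatives of $\psi_{\mathfrak{p}^m}^{-1}(\Gamma)/\Gamma$) and a lower estimate on each $v_\mathfrak{l}(e_\Gamma(z))$ in terms of the successive minima, whereas you work with the containment $K'_{\mathfrak{l}'}(\phi[\mathfrak{p}^m]) \subseteq K'_{\mathfrak{l}'}(\psi[\mathfrak{p}^m], y_1, \dotsc, y_{r_\Gamma})$ and bound the ramification coming from each generator separately. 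Both arguments hinge on the same decisive lower bound for $c_1$ via the orbit-size count.

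There is, however, a genuine gap in the ``translating back'' step. You establish only that $[K''_{\mathfrak{l}''} : K'_{\mathfrak{l}'}]$ is bounded and then assert that the tower consists of ``bounded-ramification extensions.'' In positive characteristic, bounded degree does \emph{not} bound the maximal break: an Artin--Schreier extension $y^p - y = \pi^{-n}$ has degree $p$ but upper break $n$. Hence a bound on the maximal break of $M/K''_{\mathfrak{l}''}$ together with a degree bound on $K''_{\mathfrak{l}''}/K'_{\mathfrak{l}'}$ does not by itself bound the maximal break of $M/K'_{\mathfrak{l}'}$. The repair is immediate and uses your own technique: if $\sigma$ moves some $\gamma_i$, then $\gamma_i^\sigma - \gamma_i \in \Gamma \setminus \{0\}$, so $v_{\mathfrak{l}'}(\gamma_i^\sigma - \gamma_i) \le -c_1^{r_\psi}$, and the argument of Proposition~\ref{prop1} applied to $\gamma_i$ (shifting to make it integral) bounds the maximal break of $K''_{\mathfrak{l}''}/K'_{\mathfrak{l}'}$ in terms of $c_{r_\Gamma}$ and the already-bounded ramification index. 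Cleaner still, bypass $K''$ altogether and run the whole estimate over $K'_{\mathfrak{l}'}$, splitting into the cases where $\sigma$ moves some $\gamma_i$ versus fixes all of them. (Minor slip: the kernel of $e_\Gamma$ on $\psi_{\mathfrak{p}^m}^{-1}(\Gamma)$ is $\Gamma$, not $\psi[\mathfrak{p}^m]$; this does not affect the containment you actually use.)
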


\begin{proof}
By Proposition \ref{prop1}, it suffices to give an upper bound of $v_\mathfrak{l}(a(\mathfrak{p}^m))$ in terms of $\mathfrak{p}$, $\mathfrak{l}$, $m$, $r$, and $N$.
The above remark allows us to replace $K$ with its tamely ramified extension of ramification index dividing $d$ and to assume that $\phi$ has stable reduction at $\mathfrak{l}$ over $K$.
Let $(\psi, \Gamma)$ be a Tate uniformization of $\phi$ at $\mathfrak{l}$.
If $r_\psi = r_\phi$, then $\phi$ has good reduction at $\mathfrak{l}$ and $v_\mathfrak{l}(a(\mathfrak{p}^m)) = 0$.
We assume $r_\psi < r_\phi$ for the rest of the proof.
Let $(\gamma_i)_{1 \le i \le r_\Gamma}$ be a successive minimum basis of $\Gamma$.
We have a natural isomorphism
\[ \psi_{\mathfrak{p}^m}^{- 1}(\Gamma) / \Gamma \stackrel{\sim}{\to} \phi[\mathfrak{p}^m]; \quad z + \Gamma \mapsto e_\Gamma(z). \]
Then
\[ \phi_{\mathfrak{p}^m}(X) = \mathfrak{p}^m X \prod_{\xi \in \phi[\mathfrak{p}^m] \setminus \{0\}} \left(1 - \frac{X}{\xi}\right) = \mathfrak{p}^m X \prod_{z \in (\psi_{\mathfrak{p}^m}^{- 1}(\Gamma) / \Gamma) \setminus \{0\}} \left(1 - \frac{X}{e_\Gamma(z)}\right). \]
Comparing the initial coefficient of both hand sides, we obtain
\[ a(\mathfrak{p}^m) = \mathfrak{p}^m \Big/ \prod e_\Gamma(z), \]
where $z$ runs over $(\psi_{\mathfrak{p}^m}^{- 1}(\Gamma) / \Gamma) \setminus \{0\}$.

Let $\xi_i$ be a solution of the equation $\psi_{\mathfrak{p}^m}(X) = \gamma_i$.
We claim that the set
\[ Z = \left\{\sum_{i = 1}^{r_\Gamma} \psi_{a_i}(\gamma_i) + \eta \mathrel{}\middle|\mathrel{} a_i \in A \text{ with } \deg(a_i) < \deg(\mathfrak{p}^m), \text{ and } \eta \in \psi[\mathfrak{p}^m]\right\} \]
is a system of representatives for $\psi_{\mathfrak{p}^m}^{- 1}(\Gamma) / \Gamma$ with $0 \in Z$.
Indeed, we can easily confirm that $Z$ is contained in $\psi_{\mathfrak{p}^m}^{- 1}(\Gamma)$, that its cardinality is equal to $q^{r_\phi \deg(\mathfrak{p}^m)} = \# \phi[\mathfrak{p}^m]$, and that no two elements in $Z$ are congruent modulo $\Gamma$.

In order to obtain an upper bound of $v_\mathfrak{l}(a(\mathfrak{p}^m))$, we should find a lower bound of $v_\mathfrak{l}(e_\Gamma(z))$ for a nonzero $z \in Z$.
We have
\begin{align*}
    v_\mathfrak{l}(e_\Gamma(z)) &= v_\mathfrak{l}\left( z \prod_{\gamma \in \Gamma \setminus \{0\}}\left(1 - \frac{z}{\gamma}\right)\right) \\
    &\ge v_\mathfrak{l}(z) + \sum_{\gamma \in \Gamma \setminus \{0\}; \, v_\mathfrak{l}(\gamma) > v_\mathfrak{l}(z)} v_\mathfrak{l}\left(\frac{z}{\gamma}\right) \\
    &\ge v_\mathfrak{l}(z) \cdot \# \{\gamma \in \Gamma \mid v_\mathfrak{l}(\gamma) > v_\mathfrak{l}(z)\}.
\end{align*}
The last inequality follows since $v_\mathfrak{l}(\gamma) < 0$ for a nonzero $\gamma \in \Gamma$.
For a nonzero $z = \sum \psi_{a_i}(\xi_i) + \eta \in Z$, we evaluate
\begin{align*}
    v_\mathfrak{l}(z) &\ge \min(\{v_\mathfrak{l}(\psi_{a_i}(\xi_i))\}_i \cup \{v_\mathfrak{l}(\eta)\}) \\
    &= \min\{q^{r_\psi \deg(a_i)} v_\mathfrak{l}(\xi_i)\}_i \\
    &\ge q^{r_\psi (\deg(\mathfrak{p}^m) - 1)} \cdot q^{- r_\psi \deg(\mathfrak{p}^m)} \min\{v_\mathfrak{l}(\gamma_i)\}_i \\
    &= - q^{- r_\psi} c_{r_\Gamma}^{r_\psi}.
\end{align*}
Then we find
\begin{align*}
    & \# \{\gamma \in \Gamma \mid v_\mathfrak{l}(\gamma) > v_\mathfrak{l}(z)\} \\
    &\le \# \{\gamma \in \Gamma \mid v_\mathfrak{l}(\gamma) > - q^{- r_\psi} c_{r_\Gamma}^{r_\psi}\} \\
    &= \# \left\{(a_1, \dotsc, a_{r_\Gamma}) \in A^{r_\Gamma} \mathrel{}\middle|\mathrel{} v_\mathfrak{l}\left(\sum_{i = 1}^{r_\Gamma} \psi_{a_i}(\gamma_i)\right) > - q^{- r_\psi} c_{r_\Gamma}^{r_\psi}\right\} \\
    &= \prod_{i = 1}^{r_\Gamma} \# \{a_i \in A \mid v_\mathfrak{l}(\psi_{a_i}(\gamma_i)) > - q^{- r_\psi} c_{r_\Gamma}^{r_\psi}\} \\
    &= \prod_{i = 1}^{r_\Gamma} \# \left\{a_i \in A \mathrel{}\middle|\mathrel{} \deg(a_i) < \log_q \left(\frac{c_{r_\Gamma}}{c_i}\right) - 1\right\} \\
    &< \prod_{i = 1}^{r_\Gamma} \frac{c_{r_\Gamma}}{c_i}.
\end{align*}

On the other hand, the conjugates of $\gamma_1$ are contained in the set $\{\gamma \in \Gamma \mid \lVert\gamma\rVert_\mathfrak{l} = c_1\}$, whose cardinality is at most $q^{r_\Gamma}-1$.
Hence the degree of the splitting field $L'$ of the minimal polynomial of $\gamma_1$ over $K_\mathfrak{l}$ is bounded and there exists a constant $C'$ depending on $r$ such that the ramification index $e_{L' / K_\mathfrak{l}}$ of $L' / K_\mathfrak{l}$ is at most $C'$.
(An obvious bound is $C' = (q^r - 1)!$.)
Since $v_\mathfrak{l}(\gamma_1) < 0$, we have $v_\mathfrak{l}(\gamma_1) \le - 1 / C'$ so $c_1 = (- v_\mathfrak{l}(\gamma_1))^{1 / r_\psi} \ge C'^{- 1 / r_\psi}$.
Applying Lemma~\ref{EstimationOfTheLastSM} with $\varepsilon = C'^{- 1 / r_\psi}$, we obtain
\begin{align*}
    v_\mathfrak{l}(a(\mathfrak{p}^m)) &= v_\mathfrak{l}(\mathfrak{p}^m) - \sum_{z \in Z \setminus \{0\}} v_\mathfrak{l}(e_\Gamma(z)) \\
    &\le v_\mathfrak{l}(\mathfrak{p}^m) - (q^{r_\phi \deg(\mathfrak{p}^m)} - 1) (- q^{- r_\psi} c_{r_\Gamma}^{r_\psi}) \prod_{i = 1}^{r_\Gamma} \frac{c_{r_\Gamma}}{c_i} \\
    &< v_\mathfrak{l}(\mathfrak{p}^m) + q^{r_\phi \deg(\mathfrak{p}^m) - 1} c_{r_\Gamma}^{r_\phi} \prod_{i = 1}^{r_\Gamma - 1} c_i^{- 1} \\
    &\le v_\mathfrak{l}(\mathfrak{p}^m) + q^{r_\phi \deg(\mathfrak{p}^m) - 1} (C'^{(r_\Gamma - 1) / r_\psi} D(\phi, \mathfrak{l}))^{r_\phi} C'^{r_\phi - 2} \\
    &\le v_\mathfrak{l}(\mathfrak{p}^m) + q^{r_\phi \deg(\mathfrak{p}^m) - 1} N^{r_\phi} C'^{(r_\phi + 1)(r_\phi - 2)},
\end{align*}
which proves the proposition.
\end{proof}

For a positive integer $r$, a family of positive integers $\boldsymbol{N} = (N_\mathfrak{l})_{\mathfrak{l} \in \mathfrak{M}_K^\fin}$ indexed by $\mathfrak{M}_K^\fin$, and a family of non-negative integers $\boldsymbol{m} = (m_\mathfrak{p})_{\mathfrak{p} \in \mathfrak{M}_A}$ indexed by $\mathfrak{M}_A$, let $\Phi_K(r, \boldsymbol{N}, \boldsymbol{m})$ be the set of Drinfeld $A$-module $\phi$ over $K$ of rank at most $r$ with $D(\phi, \mathfrak{l}) \le N_\mathfrak{l}$ for all $\mathfrak{l} \in \mathfrak{M}_K^\fin$.
Define $K_{r, \boldsymbol{N}, \boldsymbol{m}}$ to be the extension of $K$ generated by all elements of $\phi[\mathfrak{p}^{m_\mathfrak{p}}]$ for all $\phi \in \Phi_K(r, \boldsymbol{N}, \boldsymbol{m})$ and all $\mathfrak{p} \in \mathfrak{M}_A$.

%%%%% $K_{r, N, \boldsymbol{m}}$ is DKF %%%%%
\begin{theorem}\label{mythm}
The field $K_{r, \boldsymbol{N}, \boldsymbol{m}}$ is DKF.
\end{theorem}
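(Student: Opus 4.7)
The plan is to verify the hypotheses of Theorem~\ref{FMBOItoDKF}: that $L := K_{r, \boldsymbol{N}, \boldsymbol{m}}$ is Galois over $K$ and has finite maximal break outside $\infty$, whence $L$ is DKF. Since each $\phi \in \Phi_K(r, \boldsymbol{N}, \boldsymbol{m})$ is defined over $K$, each torsion set $\phi[\mathfrak{p}^{m_\mathfrak{p}}]$ is $G_K$-stable, so the compositum $L/K$ is Galois.

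Fix $\mathfrak{p} \in \mathfrak{M}_A$, a prime $\mathfrak{l} \in \mathfrak{M}_K^\fin$ above $\mathfrak{p}$, and a prime $\mathfrak{L}$ of $L$ above $\mathfrak{l}$. Because the upper numbering is compatible with quotients, the maximal break of $L_\mathfrak{L}/K_\mathfrak{l}$ equals the supremum of the maximal breaks of its finite Galois sub-extensions; each such sub-extension lies in a finite compositum of $K_\mathfrak{l}(\phi[\mathfrak{q}^{m_\mathfrak{q}}])/K_\mathfrak{l}$, whose maximal break is bounded by the maximum of the individual maximal breaks. Thus it suffices to bound the maximal break of $K_\mathfrak{l}(\phi[\mathfrak{q}^{m_\mathfrak{q}}])/K_\mathfrak{l}$ uniformly in $\phi \in \Phi_K(r, \boldsymbol{N}, \boldsymbol{m})$ and $\mathfrak{q} \in \mathfrak{M}_A$.

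Let $\mathfrak{l}_0 \in \mathfrak{M}_A$ denote the prime below $\mathfrak{l}$. For the single value $\mathfrak{q} = \mathfrak{l}_0$, Proposition~\ref{prop2} supplies the bound in terms of $\mathfrak{l}$, $m_{\mathfrak{l}_0}$, $r$, and $N_\mathfrak{l}$. For $\mathfrak{q} \neq \mathfrak{l}_0$, pass to a tamely ramified extension of degree dividing $\mathrm{lcm}\{q^i - 1 : 1 \le i \le r\}$ so that $\phi$ has stable reduction at $\mathfrak{l}$ with Tate uniformization $(\psi, \Gamma)$. Then $K_\mathfrak{l}(\phi[\mathfrak{q}^{m_\mathfrak{q}}])$ decomposes into $K_\mathfrak{l}(\psi[\mathfrak{q}^{m_\mathfrak{q}}])$---unramified at $\mathfrak{l}$ because $\psi$ has good reduction and $\mathfrak{q} \neq \mathfrak{l}_0$---and the adjunction of solutions $\xi_i$ of $\psi_{\mathfrak{q}^{m_\mathfrak{q}}}(X) = \gamma_i$ for a successive minimum basis $(\gamma_i)$ of $\Gamma$. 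Any two such solutions differ by an element of $\psi[\mathfrak{q}^{m_\mathfrak{q}}]$, whose nonzero elements have $v_\mathfrak{l}$-valuation zero. A Newton-polygon estimate analogous to that in Proposition~\ref{prop1}, applied to $\psi_{\mathfrak{q}^{m_\mathfrak{q}}}(X) - \gamma_i$, bounds the resulting maximal break in terms of $|v_\mathfrak{l}(\gamma_i)|$; by Lemma~\ref{EstimationOfTheLastSM}, this quantity is controlled by $D(\phi, \mathfrak{l}) \le N_\mathfrak{l}$ and $r$, independently of $\mathfrak{q}$.

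Combining these estimates bounds the maximal break of $L_\mathfrak{L}/K_\mathfrak{l}$ by a constant depending only on $\mathfrak{l}$, $r$, $N_\mathfrak{l}$, and $m_{\mathfrak{l}_0}$, so $L/K$ has finite maximal break outside $\infty$, and Theorem~\ref{FMBOItoDKF} concludes that $L$ is DKF. The main obstacle is this uniformity in $\mathfrak{q}$ for bad-reduction $\phi$: Proposition~\ref{prop2} as stated has a $\mathfrak{q}$-dependent bound, so one must revisit its derivation in the case $\mathfrak{q} \neq \mathfrak{l}_0$ and extract that the essential dependence is on the lattice invariants of $\phi$ at $\mathfrak{l}$ rather than on $\mathfrak{q}$.
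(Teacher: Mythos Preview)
Your overall strategy and the reduction step match the paper exactly: show $L=K_{r,\boldsymbol{N},\boldsymbol{m}}$ is Galois over $K$ with finite maximal break outside $\infty$, then invoke Theorem~\ref{FMBOItoDKF}; and reduce to a uniform bound on the maximal break of $K_\mathfrak{l}(\phi[\mathfrak{q}^{m_\mathfrak{q}}])/K_\mathfrak{l}$ over all $\phi$ and $\mathfrak{q}$. The case $\mathfrak{q}=\mathfrak{l}_0$ via Proposition~\ref{prop2} is also the same.

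The case $\mathfrak{q}\neq\mathfrak{l}_0$ is where your proposal diverges from the paper and where it has a real gap. Your plan is to run a Newton-polygon argument ``analogous to Proposition~\ref{prop1}'' on $\psi_{\mathfrak{q}^{m_\mathfrak{q}}}(X)-\gamma_i$. But the bound produced by that template has the shape $e_{L/K_\mathfrak{l}}\cdot(\text{valuation data})-1$, and for $\psi_{\mathfrak{q}^{m_\mathfrak{q}}}(X)-\gamma_i$ the relevant splitting field has degree divisible by (a large factor of) $q^{r_\psi\deg(\mathfrak{q}^{m_\mathfrak{q}})}$; so $e_{L/K_\mathfrak{l}}$ itself depends on $\mathfrak{q}$ and the resulting bound blows up with $\deg(\mathfrak{q}^{m_\mathfrak{q}})$. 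Your final paragraph correctly flags this as ``the main obstacle'' but does not resolve it; extracting a $\mathfrak{q}$-independent upper break from this torsor $M(\xi_i)/M$ would require a genuinely finer ramification analysis than the coarse estimate of Proposition~\ref{prop1}. There is also a secondary gap: to apply Lemma~\ref{EstimationOfTheLastSM} you need a uniform lower bound on the first successive minimum $c_1$, which you have not supplied (the paper obtains $c_1\ge C'^{-1/r_\psi}$ by bounding $[K_\mathfrak{l}(\gamma_1):K_\mathfrak{l}]$).

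The paper avoids the uniformity problem by a different idea: rather than chasing the $\xi_i$ (which depend on $\mathfrak{q}$), it bounds the maximal break of $K_\mathfrak{l}(\Gamma)/K_\mathfrak{l}$ --- a fixed finite extension that does not involve $\mathfrak{q}$ at all --- using Gardeyn's estimate for the different together with the successive-minima bounds. It then uses the exact sequence
\[
0 \to \psi[\mathfrak{q}^{m_\mathfrak{q}}] \to \phi[\mathfrak{q}^{m_\mathfrak{q}}] \to \Gamma/\mathfrak{q}^{m_\mathfrak{q}}\Gamma \to 0
\]
to transfer the bound: once $G_{K_\mathfrak{l}}^u$ acts trivially on $\psi[\mathfrak{q}^{m_\mathfrak{q}}]$ (unramified) and on $\Gamma$ (by the Gardeyn bound), one argues it acts trivially on $\phi[\mathfrak{q}^{m_\mathfrak{q}}]$. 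The point is that the only nontrivial ramification-theoretic input, the bound for $K_\mathfrak{l}(\Gamma)/K_\mathfrak{l}$, is manifestly $\mathfrak{q}$-free. That is the step your sketch is missing.
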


\begin{proof}
By Theorem~\ref{FMBOItoDKF}, it is enough to show that $K_{r, \boldsymbol{N}, \boldsymbol{m}}$ has finite maximal break outside $\infty$.
We remark that Lemma~3.1 in \cite{OT} still holds in the context of function fields.
Thus it suffices to prove that, for any $\mathfrak{l} \in \mathfrak{M}_K^\fin$, there exists a constant $u(\mathfrak{l})$ such that the maximal break of $K_\mathfrak{l}(\phi[\mathfrak{p}^{m_\mathfrak{p}}]) / K_\mathfrak{l}$ is at most $u(\mathfrak{l})$ for any $\phi \in \Phi_K(r, \boldsymbol{N}, \boldsymbol{m})$ and any $\mathfrak{p} \in \mathfrak{M}_A$.

Fix $\mathfrak{l} \in \mathfrak{M}_K^\fin$, $\phi \in \Phi_K(r, \boldsymbol{N}, \boldsymbol{m})$, and $\mathfrak{p} \in \mathfrak{M}_A$.
By the remark preceding Proposition~\ref{prop2}, we may assume that $\phi$ has stable reduction at $\mathfrak{l}$ over $K$.

If $\mathfrak{p}$ lies beneath $\mathfrak{l}$, then there is a constant $C_1$ such that the maximal break of $K_\mathfrak{l}(\phi[\mathfrak{p}^{m_\mathfrak{p}}]) / K_\mathfrak{l}$ is at most $C_1$ by Proposition \ref{prop2}.
Suppose $\mathfrak{l} \nmid \mathfrak{p}$.
Let $(\psi, \Gamma)$ be the Tate uniformization of $\phi$ at $\mathfrak{l}$.
Since $\psi$ has good reduction, $K_\mathfrak{l}(\psi[\mathfrak{p}^{m_\mathfrak{p}}]) / K_\mathfrak{l}$ is unramified~\cite[Theorem~1]{Takahashi}.
Let $u$ be a real number with $G_{K_\mathfrak{l}}^u \neq 1$ and $\sigma \neq 1$ belong to $G_{K_\mathfrak{l}}^u$.
Then there is $i$ with $\gamma_i^\sigma \neq \gamma_i$.
From the definition of the first successive minimum $c_1$, we have $\lVert\gamma_i^\sigma - \gamma_i\rVert_\mathfrak{l} \ge c_1$.
Hence $v_\mathfrak{l}(\gamma_i^\sigma - \gamma_i) \le - c_1^{r_\psi}$.
Take $b \in K_\mathfrak{l}$ with $- v_\mathfrak{l}(\gamma_i) \le v_\mathfrak{l}(b) < - v_\mathfrak{l}(\gamma_i) + 1$.
Then $v_\mathfrak{l}(b \gamma_i) \ge 0$ and
\begin{align*}
    v_{K_\mathfrak{l}(\Gamma)}((b \gamma_i)^\sigma - b \gamma_i) &= e_{K_\mathfrak{l}(\Gamma) / K_\mathfrak{l}}v_\mathfrak{l}((b \gamma_i)^\sigma - b \gamma_i) \\
    &= e_{K_\mathfrak{l}(\Gamma) / K_\mathfrak{l}}(v_\mathfrak{l}(\gamma_i^\sigma - \gamma_i) + v_\mathfrak{l}(b)) \\
    &< e_{K_\mathfrak{l}(\Gamma) / K_\mathfrak{l}}(- c_1^{r_\psi} - v_\mathfrak{l}(\gamma_i) + 1) \\
    &\le e_{K_\mathfrak{l}(\Gamma) / K_\mathfrak{l}}(- c_1^{r_\psi} + c_{r_\Gamma}^{r_\psi} + 1),
\end{align*}
where $e_{K_\mathfrak{l}(\Gamma) / K_\mathfrak{l}}$ is the ramification index of $K_\mathfrak{l}(\Gamma) / K_\mathfrak{l}$.
Thus
\[ u \le e_{K_\mathfrak{l}(\Gamma) / K_\mathfrak{l}}(- c_1^{r_\psi} + c_{r_\Gamma}^{r_\psi} + 1) - 1. \]
By~\cite[Proposition~4]{Gardeyn}, on the different $\mathfrak{D}(K_\mathfrak{l}(\Gamma) / K_\mathfrak{l})$ of $K_\mathfrak{l}(\Gamma) / K_\mathfrak{l}$, we have
\[ \ord_\mathfrak{l} \mathfrak{D}(K_\mathfrak{l}(\Gamma) / K_\mathfrak{l}) \le 1 + 2 \sum_{i = 1}^{r_\Gamma} \left(q^{i - 1} v_\mathfrak{l}\left(\frac{\gamma_1}{\gamma_i}\right) \prod_{j = 1}^i \frac{c_i}{c_j}\right). \]
As in the proof of Proposition~\ref{prop2}, there exists a constant $C'$ depending only on $r$ such that $c_1 \ge C'^{- 1 / r_\psi}$.
Then by~\cite[Chapter~III, Section~6, Proposition~13]{Serre}, we evaluate
\begin{align*}
    e_{K_\mathfrak{l}(\Gamma) / K_\mathfrak{l}} &\le 1 + \ord_\mathfrak{l} \mathfrak{D}(K_\mathfrak{l}(\Gamma) / K_\mathfrak{l}) \\
    &= 2 + 2 \sum_{i = 1}^{r_\Gamma} \left(q^{i - 1} (- c_1^{r_\psi} + c_i^{r_\psi}) \prod_{j = 1}^i \frac{c_i}{c_j}\right) \\
    &\le 2 + 2 \sum_{i = 1}^{r_\Gamma} q^{i - 1} c_i^{r_\psi} \left(\frac{c_i}{c_1}\right)^{i - 1} \\
    &\le 2 + 2 c_{r_\Gamma}^{r_\psi} \sum_{i = 1}^{r_\Gamma} \left(\frac{q c_{r_\Gamma}}{c_1}\right)^{i - 1} \\
    &< 2 + 2 c_{r_\Gamma}^{r_\psi} \cdot 2 \left(\frac{q c_{r_\Gamma}}{c_1}\right)^{r_\Gamma - 1} \\
    &= 2 + 4 q^{r_\Gamma - 1} c_{r_\Gamma}^{r_\phi - 1} c_1^{- (r_\Gamma - 1)} \\
    &\le 2 + 4 q^{r - 2} (C'^{r - 2} D(\phi, \mathfrak{l}))^{r - 1} C'^{r - 2} \\
    &\le 2 + 4 q^{r - 2} C'^{r (r - 2)} N_\mathfrak{l}^{r - 1}.
\end{align*}
Here we use $\sum_{i = 1}^{r_\Gamma} \alpha^{i - 1} < 2 \alpha^{r_\Gamma - 1}$ for $\alpha \ge 2$, and apply Lemma~\ref{EstimationOfTheLastSM}.
Hence
\begin{align*}
    u &< (2 + 4 q^{r - 2} C'^{r (r - 2)} N_\mathfrak{l}^{r - 1}) ((C'^{r_\Gamma - 1} N_\mathfrak{l})^{r_\psi} + 1) - 1 \\
    &\le (2 + 4 q^{r - 2} C'^{r (r - 2)} N_\mathfrak{l}^{r - 1}) (C'^{(r - 1)^2 / 4} N_\mathfrak{l}^r + 1) - 1.
\end{align*}
Set $C_2$ to be the rightmost hand side in the above inequality.

Now we put $u(\mathfrak{l}) = \max \{C_1, C_2\}$.
We are going to show that this $u(\mathfrak{l})$ is the desired constant.
The case $\mathfrak{l} \mid \mathfrak{p}$ is clear.
Suppose that $\mathfrak{l} \nmid \mathfrak{p}$.
We have the exact sequence
\[ 0 \to \psi[\mathfrak{p}^{m_\mathfrak{p}}] \stackrel{e_\Gamma}{\to} \phi[\mathfrak{p}^{m_\mathfrak{p}}] \to \Gamma / \mathfrak{p}^{m_\mathfrak{p}} \Gamma \to 0 \]
on which $G_{K_\mathfrak{l}}$ acts compatibly.
We view this sequence as that of $\mathbb{F}_q$-vector spaces.
Then this sequence splits.
If $u \ge u(\mathfrak{l})$, then $G_{K_\mathfrak{l}}^u$ acts trivially on $\psi[\mathfrak{p}^{m_\mathfrak{p}}]$ and $\Gamma / \mathfrak{p}^{m_\mathfrak{p}} \Gamma$.
Therefore $G_{K_\mathfrak{l}}^u$ also acts trivially on $\phi[\mathfrak{p}^{m_\mathfrak{p}}]$ and the proof is completed.
\end{proof}

%%%%% Constant extension of $K_{r, N, \boldsymbol{m}}$ %%%%%
\begin{remark}
There exists a constant $C$ depending only on $K$ and $r$ such that, for any Drinfeld $A$-module $\phi$ over $K$ of rank $r$, the degree of the algebraic closure of $\mathbb{F}_q$ in $K(\phi(K^\separable)_\tor)$ is at most $C$~\cite[Remark~4.2]{Gekeler}.
Therefore, the algebraic closure of $\mathbb{F}_q$ in $K_{r, \boldsymbol{N}, \boldsymbol{m}}$ is finite.
From Corollary~\ref{ConstExt}, we see that $K_{r, \boldsymbol{N}, \boldsymbol{m}} \overline{\mathbb{F}_q}$ is an infinite extension of $K_{r, \boldsymbol{N}, \boldsymbol{m}}$ and still DKF.
\end{remark}

\bibliographystyle{amsalpha}

\end{document}